\newcommand{\R}{{\mathbb R}}
\newcommand{\N}{{\mathbb N}}
\DeclareMathOperator*{\esssup}{ess\,sup}
\renewcommand{\~}{\widetilde}
\def\B{{\cal B}}
\def\E{{\cal E}}
\def\F{{\cal F}}
\def\F{{\cal F}}
\def\V{{\cal V}}
\def\A{{\cal A}}
\def\e{{\cal E}}
\def\ci{\mbox{$C_0^\infty(E)$}}
\def\tt{\mbox{$(T_t)_{t > 0}$}}
\def\tht{\mbox{$(\widehat{T}_t)_{t > 0}$}}
\def\ga{\mbox{$(G_\alpha)_{\alpha > 0}$}}
\def\e{{\varepsilon}}
\def\vt{\widehat{v}_t}
\newtheorem{defn}{{\it DEFINITION}}
\newtheorem{theo}[defn]{{\it THEOREM}}
\newtheorem{lem}[defn]{{\it LEMMA}}
\newtheorem{prop}[defn]{{\it PROPOSITION}}
\newtheorem{cor}[defn]{{\it COROLLARY}}
\newtheorem{rem}[defn]{{\it REMARK}}
\newtheorem{exam}[defn]{{\it EXAMPLE}}
\newenvironment{proof}{{\bf\it Proof }}{{\vskip 0.1cm \hfill$\Box$}}
\begin{document}

\noindent
{\Large\bf Conservativeness criteria for generalized Dirichlet forms}

\bigskip
\noindent
{\bf Minjung Gim},
{\bf Gerald Trutnau}
\\

\noindent
{\small{\bf Abstract.} We develop sufficient analytic conditions for conservativeness of non-sectorial 
perturbations of symmetric Dirichlet forms which can be represented through a carr\'e du champ on a locally compact separable metric space. These form an important subclass of generalized Dirichlet forms which were introduced in \cite{St1}. In case there exists an associated strong Feller process, 
the analytic conditions imply conservativeness, i.e. non-explosion of the associated process in the classical probabilistic sense. 
As an application of our general results on locally compact separable metric state spaces, we consider a generalized Dirichlet form given on a closed or open subset of $\mathbb{R}^d$ which is given as a divergence free first order perturbation of a symmetric energy form. Then using volume growth conditions of the carr\'e du champ and the non-sectorial first order part, we derive an explicit criterion 
for conservativeness. We present several concrete examples which relate our results to previous ones obtained by different authors. 
In particular, we show that conservativeness can hold for a large variance if the anti-symmetric part of the drift is strong enough to compensate it.
This work continues our previous work on transience and recurrence of generalized Dirichlet forms. } \\ \\
\noindent
{Mathematics Subject Classification (2010): primary; 31C25, 47D07, 60J60; secondary: 60H30.}\\

\noindent
{Key words: generalized Dirichlet forms, non-symmetric Dirichlet forms, conservativeness criteria, non-explosion results, Markov semigroups, Diffusion processes.}

\section{Introduction}\label{1}
Conservativeness criteria for $C_0$-semigroups of contractions, non-explosion criteria for Markov processes 
and related problems are important topics both in analysis and probability theory. These were hence studied by many authors under various aspects (see for instance \cite{Da92, Ga59, Gri86, HoJa96, WaMaU, OU15, Sch98, Stu1, St1,Ta89, Ta91, TaTr} and references therein).\\
Here, we take an analytic point of view which fits to the frame of possibly unbounded and discontinuous coefficients.
The method that we use is a refinement of the method developed by Gaffney \cite{Ga59} and also Davies \cite{Da92} and recently localized by Oshima and Uemura \cite{OU15}. In \cite{OU15} a unified method to obtain the conservativeness of a class of Markov processes associated with lower bounded semi-Dirichlet forms, including symmetric diffusion processes, some non-symmetric diffusion processes and jump type Markov processes is presented. We consider a similar unified approach but our interest focuses more on applications to general elliptic diffusions. 
Consequently, our localization procedure of the Davies method is more adapted to the elliptic diffusion case and quite different from the one in \cite{OU15}. \\
The main purpose of this paper is to develop conservativeness criteria for (Markov processes $\mathbb{M}$ or for $C_0$-semigroups of contractions corresponding to) a generalized Dirichlet form which can be expressed as a linear perturbation of a Dirichlet form which can be represented by a carr\'e du champ. Let us briefly explain our technical and conceptional frame. We consider a locally compact separable metric space $E$, a locally finite (i.e. finite on compacts) positive measure $\mu$ with full support 
on $E$ and a generalized Dirichlet form $\E$ that can be decomposed as
\begin{equation}\label{in}
\E(u,v)=\int_E \Gamma(u,v)d\mu+\int_E u Nv d\mu,
\end{equation}
where $\int_E \Gamma(u,v)d\mu$ is a symmetric Dirichlet form on $L^2(E,\mu)$ represented by a carr\'e du champ $\Gamma$ and $(N,D(N))$ is a linear operator on $L^2(E,\mu)$. The precise conditions are formulated in localized form as (H1), (H2) in section \ref{2} below. We further assume (H3), (H4) which are also formulated in section \ref{2}. 
(H3) corresponds to \cite[Assumption (A)]{Stu1} and its consequence \cite[Lemma 1]{Stu1}, i.e. (H3) allows us
to obtain nice cut-off functions (see (\ref{chin})) and to obtain a suitable exhaustive sequence for the state space 
(see (\ref{E4n})). In Remark \ref{rem3} we explain why any symmetric strongly local and regular Dirichlet form satisfying \cite[Assumption (A)]{Stu1} satisfies (H1)-(H3). Since the semigroups that we consider are in general not analytic, we have to impose the denseness condition (H4), where the set $D_0$ that occurs in (H4) is given as in (\ref{D0}). Remark \ref{operatordense} explains more on $D_0$, (H4) and condition (A) that is just used as an auxiliary assumption to perform further calculations (see the sentence right before condition (A)). In Lemma \ref{rmk1}, we include for the reader's convenience a proof to the fact that the conservativeness of the semigroup $(T_t)_{t>0}$ on $L^{\infty}(E,\mu)$ (obtained from the $L^2(E,\mu)$-semigroup associated to $\E$) is equivalent to the $(\widehat {T}_t)_{t>0}$-invariance on $L^1(E,\mu)$. In Lemma \ref{lem4} we derive similarly to \cite{OU15} an equivalent criterion for the $(\widehat {T}_t)_{t>0}$-invariance in localized form. In order to estimate the limit in Lemma \ref{lem4} by the Davies method, we use the functions $\psi_n$ defined in (\ref{psin0}) via the function $\phi$ defined right before display (\ref{psin0}) and then define the "Davies semigroup" in (\ref{psin}). Then in a series of calculations, starting from 
(\ref{ineq1}), and using the key inequality (\ref{key}) which only holds for divergence free perturbations, i.e. because of (\ref{n1}), we obtain our main Theorem \ref{thm6} and its Corollary \ref{g1}. Theorem \ref{thm6} and  Corollary \ref{g1} 
form the core of our paper and will be used to obtain explicit conservativeness criteria in the symmetric, non-symmetric and non-sectorial case. \\
The organization of the following sections \ref{3} and \ref{4} are then as follows.
In section 3, we consider applications of our core results to the symmetric case. Here our results are comparable to \cite{OU15} (see Example \ref{symdf} and Remark \ref{sturm}) and we recover a result of \cite{Stu1} (see Remark \ref{sturm} and also \cite{Gri86} and \cite{Ta91} and references therein) by applying our main Proposition \ref{symexam}. In subsection \ref{exam10}, we consider sectorial perturbations of symmetric Dirichlet forms. Using Corollary \ref{g1}(iii) we are able to reconfirm and hence shorten the proof of a result on conservativeness from \cite[Lemma 5.4]{RST} in subsection \ref{ShTrex}. In subsection \ref{exam11}, we show that Theorem \ref{thm6} (resp. Corollary \ref{g1}) is also applicable to non-symmetric Dirichlet forms with non-symmetric diffusion matrix. The key observation is that the anti-symmetric part of the diffusion matrix becomes a $\mu$-divergence free vector field after integration by parts. The sufficient criteria (\ref{esti}) and (\ref{ahnp}) for conservativeness extend the result of \cite{TaTr} in the sense that we can now consider invariant measures $\mu=\varphi^2 dx$ where $\varphi \nequiv 1$. We also show that we can recover the result of \cite{TaTr} to some extend in case $\varphi\equiv1$ in
subsection \ref{3.3.1} (cf. Remark \ref{tatrrem}). In section \ref{4}, we consider non-sectorial perturbations of symmetric Dirichlet forms on Euclidean space as introduced in \cite{GiTr}. For the convenience of the reader, we explain in concise form the construction of the underlying generalized Dirichlet form from \cite{GiTr}, how the constructed generalized Dirichlet fits into the frame of section \ref{2}, as well as some of its main properties. Subsequently, we apply the conservativeness criterion of section \ref{2} to formulate Corollary \ref{cor13} and to obtain two different explicit examples. Examples \ref{ex16} and \ref{ex1} show that conservativeness can hold for a large variance if the anti-symmetric part of the drift is strong enough to compensate it. Moreover, Example \ref{ex1} indicates that our conservativeness criteria in dimension one can be in some situations sharper than the ones of \cite{Stu1}, but not as sharp as the Feller test is (cf. Remark \ref{rem15}).\\
Let us finally explain our main motivation for this work. 
Conservativeness criteria lead to uniqueness results both at analytic and probabilistic level. Let us discuss both of these. The non-symmetry assumption (or even the lack of sector condition) is here of particular importance, since it leads to a wider class of semigroups and stochastic processes to which the conservativeness criteria can be applied than the restrictive assumption of symmetry. It is pointed out in \cite{St1} that the $(\widehat {T}_t)_{t>0}$-invariance of the underlying measure $\mu$ is related to the $L^1$-uniqueness of the corresponding infinitesimal generator and can be applied to obtain existence of a unique invariant measure. On the other hand  $(\widehat {T}_t)_{t>0}$-invariance is equivalent to the conservativeness of the dual semigroup  $({T}_t)_{t>0}$ (cf. Lemma \ref{rmk1}). Thus conservativeness criteria can be used to obtain $L^1$-uniqueness and existence of unique invariant measures for Markov semigroups. The second important application of the conservativeness criteria that we study is the relation to new non-explosion results 
for solutions to singular SDE which were constructed probabilistically up to an explosion time in \cite{KR}  and \cite{Zh}. 
There it is shown that certain SDE in $\R^d$ with merely $L^p$-integrability conditions on the dispersion and drift coefficients have pathwise unique and strong solutions up to their explosion times, i.e. the random times at which they leave $\R^d$. Thus, if we can construct 
weak solutions to these SDE via (generalized or non-symmetric) Dirichlet form theory, then the analytic conservativeness criteria lead to new non-explosion results for these SDE. We refer the interested reader to the articles \cite{RST}, \cite{ShTr3} where this kind of application has been studied and to subsection \ref{ShTrex} where the results of this article are applied  to  obtain a considerably shorter proof for conservativeness than in \cite[Lemma 5.4]{RST}. For further related work in the context of applications that we are interested in, we refer to the recent work \cite{W15} where non-explosion and existence and uniqueness of invariant measures is investigated.\\

\section{Framework and a general criterion for conservativeness of a generalized Dirichlet form}\label{2}
Let $(E,d)$ be a locally compact separable metric space and let $\mu$ be a locally finite (i.e. finite on compacts) positive measure on its Borel $\sigma$-algebra $\B(E)$. We assume that $\mu$ has full support. The closure of $A\subset E$ will be denoted by $\overline{A}$ and $A^c:=E\setminus A$ stands for the complement of $A$ in $E$. For $1\leq p<\infty$, let $L^p(E,\mu)$ be the space of equivalence classes of $p$-integrable functions with respect to $\mu$ and $L^\infty(E,\mu)$ be the space of $\mu$-essentially bounded functions. We denote the corresponding norms by $\| \cdot \|_{L^p(E,\mu)}$, $p\in [1,\infty]$ and to make notations easier, we do not distinguish at times between equivalence class and representative. The inner product of the Hilbert space $L^2(E,\mu)$ will be denoted by $(\ ,\ )$. The support of a function $u$ on $E$ (=support of $|u|d\mu$) is denoted by supp($u)$. For any set of functions $W$ on $E$, we will denote by $W_0$ the set of functions $u\in W$ which have a compact support in $E$ and by $W_b$ the set of functions in $W$ which are bounded $\mu$-a.e. and let  $W_{loc}$ be the set of measurable functions $u$ such that for any relatively compact open set $V$, there exists $v\in W$ with $u=v$ $\mu$-a.e. on $V$. Let $W_{0,b}:=W_0 \cap W_b$ and define $W_{loc,b}$ by the set of bounded measurable functions $u$ such that $u\in W_{loc}$. Let $C_0(E)$ be the set of continuous functions $u$ such that supp$(u$) is a compact in $E$ and $C_b(E)$ be the set of bounded continuous functions. We say that a statement holds for $n\gg 1$, if there exists some $N\in \N$ such that the statement holds for any $n\geq N$.\\
Let $(\A,\V)$ be a Dirichlet form (not necessarily symmetric) on $L^2(E,\mu)$ in the sense of \cite[I. Definition 4.5]{MR}. So $\V$ is a real Hilbert space with respect to the norm $\|u\|^2_\V:=\A_1(u,u):=\A(u,u)+(u,u)$. Denote the dual space of $\V$ by $\V'$. Assume that there exists a linear operator $(\Lambda,D(\Lambda,L^2(E,\mu)))$ on $L^2(E,\mu)$ which is a  generator of a sub-Markovian $C_0$-semigroup of contractions $(U_t)_{t>0}$ on $L^2(E,\mu)$ that can be restricted to a $C_0$-semigroup on $\V$. Then the conditions (D1) and (D2) in \cite[Chapter I]{St2} are satisfied. In particular, $\Lambda : D(\Lambda,L^2(E,\mu))\cap \V \longrightarrow \V'$ is closable. Let $(\Lambda,\F)$ be the closure of $\Lambda : D(\Lambda,L^2(E,\mu))\cap \V \longrightarrow \V'$. Then $\F$ is a real Hilbert space with corresponding norm
$$
\|u\|^2_\F:=\|u\|^2_\V+\|\Lambda u \|^2_{\V'}.
$$
By \cite[Lemma I.2.4, p.13]{St2} the adjoint semigroup $(\hat U_t)_{t\ge 0}$ of $(U_t)_{t\ge 0}$ can be extended  
to a $C_0$-semigroup on $\V'$ and the corresponding generator $(\hat\Lambda,D(\hat\Lambda,\V'))$ 
is the dual operator of $(\Lambda, D(\Lambda,\V))$. Let $\hat{\F}:=D(\hat\Lambda,\V')\cap {\V}$. 
Then $\hat{\F}$ is a real Hilbert space with corresponding norm 
$$
\|u\|^2_{\hat{\F}} :=\|u\|^2 _{\V} + \|\hat \Lambda u \|_{{\V}'}^2.
$$
Let the form $\E$ be given by

\[ {\E}(u,v):= \left\{ \begin{array}{r@{\quad\quad}l}
 {\A}(u,v)-{}_{\V'}\langle \Lambda u, v \rangle_\V & \mbox{ for}\ u \in{\F},\ v\in {\V} \\ 
            {\A}(u,v)-{}_{\V'}\langle \hat\Lambda v, u \rangle_\V & \mbox{ for}\ u\in{\V},\ v\in\hat
            {\F} \end{array} \right. \] \\
where ${}_{\V'}\langle \cdot, \cdot \rangle_\V$ denotes the dualization between $\V'$ and
$\V$.  Note that ${}_{\V'}\langle \cdot, \cdot \rangle_\V$  coincides with 
$(\cdot,\cdot)$ when restricted to ${L^2(E,\mu)\times\V}$ and that $\E$ is well-defined. $\E$ is called the {\it bilinear form associated with} $(\A,\V)$ and $(\Lambda, D(\Lambda,L^2(E,\mu)))$ (see \cite[I. Definition 2.9]{St2}). Then $\E$ is a generalized Dirichlet form (see \cite[I. Proposition 4.7]{St2}). \\
Let ($G_\alpha)_{\alpha>0}$ and ($\widehat{G}_\alpha)_{\alpha>0}$ on $L^2(E,\mu)$ be associated with $\E$, i.e. ($G_\alpha)_{\alpha>0}$ is the sub-Markovian $C_0$-resolvent of contractions on $L^2(E,\mu)$ satisfying $G_\alpha (L^2(E,\mu))\subset \F$,
$$
\E_\alpha (G_\alpha f, g)=(f,g),\quad f\in L^2(E,\mu),\ g\in \V,
$$
where $\E_\alpha(u,v):=\E(u,v)+\alpha(u,v)$ for $\alpha>0$ and $(\widehat{G}_\alpha)_{\alpha>0}$ is the adjoint $C_0$-resolvent of contractions of $\ga$ (see \cite[I. Proposition 3.6]{St2}). 
\begin{rem}\label{process}
In contrast to the cases of symmetric or non-symmetric Dirichlet forms (which is covered for $\Lambda\equiv 0$ with 
$\F=\V=\hat\F$) it is not known whether for generalized Dirichlet forms regularity or quasi-regularity alone implies the existence of an associated process (cf. \cite[Chapter 7]{FOT} and  \cite[IV.Theorem 3.5]{MR}). In addition to the quasi-regularity the structural assumption D3 is made in \cite[IV.\,2]{St2} in order to derive the existence of  an $m$-tight special standard process 
$\Bbb M=(\Omega,({\cal F}_t)_{t\ge
  0},(Y_t)_{t\ge 0}, (\mathbb{P}_z)_{z\in E_\Delta})$ with lifetime $\zeta$ such that the process resolvent $R_\alpha
  f$ 
of $\Bbb M$ is
  an $\E$-quasi-continuous $\mu$-version of $G_\alpha f$ for all $\alpha  >0$, $f\in
  L^2(E,\mu)_b$ (cf. \cite[IV. Theorem 2.2]{St2} and also \cite[Theorem 4.2]{PeTr} for the construction of an associated Hunt process under the condition D3 together with the strict quasi-regularity assumption).  Condition D3 is e.g. known to hold when $\F$ contains a dense algebra of bounded functions (see \cite[IV. Theorem 2.1]{St2} and also \cite[Proposition 2.1]{PeTr}), thus in particular satisfied for any Dirichlet form in the sense of \cite{FOT} and \cite{MR}. Later on, we will consider examples. The Dirichlet forms in subsections \ref{exam8}, \ref{exam10} and \ref{exam11} are all regular hence associated with a process in the above sense. Moreover, similarly to \cite[Section 3]{St1} one may show that the generalized Dirichlet form constructed in subsection \ref{4.1} is also associated with a process.
\end{rem}
By \cite[I. Proposition 1.5]{MR}, there exists exactly one linear operator ($L,D(L))$ (resp. $(\widehat{L},D(\widehat{L}))$) on $L^2(E,\mu)$ corresponding to $(G_\alpha)_{\alpha>0}$ (resp. $(\widehat{G}_\alpha)_{\alpha>0}$). Then $(\widehat{L},D(\widehat{L}))$ is  the adjoint operator of  ($L,D(L))$. Let $\tt$ and $(\widehat{T}_t)_{t>0}$ be the $C_0$-semigroups of contractions corresponding to $\ga$ and $(\widehat{G}_\alpha)_{\alpha>0}$ respectively. $\tht$ restricted to $L^1(E,\mu)\cap L^2(E,\mu)$ can be extended to a $C_0$-semigroup of contractions on $L^1(E,\mu)$.   This extension will also be denoted by $\tht$. $\tht$ is not necessarily sub-Markovian, however from (H1) on (see below), the sub-Markovianity follows and is hence assumed to hold.\\
Now we shall define the conservativeness of $\tt$. Since $\tt$ is a sub-Markovian $C_0$-semigroup of contractions on $L^2(E,\mu),$ $\tt$ can be extended to a linear operator on $L^\infty(E,\mu)$. In fact, for $f\in L^\infty(E,\mu)$ with $f \geq 0$ $\mu$-a.e., we may set
$$
T_tf:=\lim_{n\to \infty}T_t f_n
$$
where $f_n\in L^2(E,\mu)\cap L^\infty(E,\mu)$ such that $f_n \nearrow f$ $\mu$-a.e. as $n\to \infty$. Since $\tt$ is positivity preserving, the limit is well-defined $\mu$-a.e. and is independent of the choice of approximating sequence. For general $f\in L^\infty(E,\mu)$, considering the decomposition $f=f^+-f^-$ in positive and negative parts, $T_t f$ is well-defined by $T_t f:= T_t f^+ -T_t f^-$. \\
\centerline{}
\begin{defn}\label{conservative} $\tt$ is said to be conservative if
\begin{equation}\label{tt1}
T_t 1=1\ \mu\text{-a.e. for some (and hence any) } t>0.
\end{equation}
\end{defn}
\begin{rem}
Note that if there exists a process associated with the generalized Dirichlet form $\E$, as pointed out in Remark \ref{process}, then the conservativeness of $\tt$ implies that the process is non-explosive, i.e. $\mathbb{P}_x(\zeta=\infty)=1$ for $\mu$-a.e. (actually even $\E$-quasi-every) $x\in E$. Clearly (since $\mu$ is assumed to have full support), if the transition function 
$P_t f(x):=\mathbb{E}_x[f(X_t)]$ (here $\mathbb{E}_x$ denotes the expectation w.r.t. $\mathbb{P}_x$) is strong Feller, i.e. 
$x\mapsto P_t f(x)$ is continuous in $x\in E$ for any $t>0$ and any bounded Borel measurable function on $E$, then it even holds $\mathbb{P}_x(\zeta=\infty)=1$ for every $x\in E$. The latter is for instance the case for the Dirichlet form in Example \ref{ShTrex}, cf. \cite[Proposition 2.9(ii) and Section 5]{RST}.
\end{rem}
\begin{lem}\label{rmk1}
Let $ D$ be an arbitrary dense subset of $L^1(E,\mu)$. Then, $\tt$ is conservative, if and only if for some (and hence any) $t>0$
\begin{equation}\label{tt2}
\int_E \widehat {T}_t f d\mu =\int_E f d\mu\quad \text{for any } f\in D,
\end{equation}
i.e. $\mu$ is $(\widehat {T}_t)_{t>0}$-invariant.
\end{lem}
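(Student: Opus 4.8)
The plan is to exploit the duality between $(T_t)_{t>0}$ on $L^\infty(E,\mu)$ and $(\widehat T_t)_{t>0}$ on $L^1(E,\mu)$ together with the fact that $(\widehat T_t)_{t>0}$ is sub-Markovian, so that $T_t1\le 1$ and $\int_E \widehat T_t f\,d\mu \le \int_E f\,d\mu$ for $f\ge 0$ always hold; conservativeness and invariance then each say that a certain non-negative defect vanishes, and these two defects pair against each other. First I would note that it suffices to prove the equivalence for one fixed $t>0$, since both conditions are stated for ``some (and hence any)'' $t$; the ``hence any'' part follows from the semigroup property: if $T_t1=1$ then $T_{s}1 = T_{s-t}T_t 1 = T_{s-t}1$ reduces larger $s$ to smaller ones and, going the other way, $T_{s+t}1 = T_sT_t1 = T_s 1$, so the set of $t$ with $T_t1=1$ is closed under addition and, using $T_t1\le 1$ monotonicity in $t$, actually all of $(0,\infty)$ once it is nonempty; similarly for \eqref{tt2} via the semigroup property of $(\widehat T_t)_{t>0}$ on $L^1$. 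A second reduction: it suffices to verify \eqref{tt2} for $f$ in \emph{one} dense subset $D$, because both sides of \eqref{tt2} are continuous in $f$ with respect to the $L^1(E,\mu)$-norm — the left side since $\widehat T_t$ is a contraction on $L^1$ and $\mu$-integration is $L^1$-continuous, the right side trivially — so equality on a dense set extends to all of $L^1(E,\mu)$; in particular I may as well take $D = L^1(E,\mu)\cap L^2(E,\mu)$, or even $L^2(E,\mu)_b\cap L^1(E,\mu)$, for the main computation.

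The core of the argument is the dual pairing identity
\[
\int_E (\widehat T_t f)\, g\, d\mu = \int_E f\, (T_t g)\, d\mu
\]
valid for $f\in L^1(E,\mu)$ and $g\in L^\infty(E,\mu)$, which I would first establish for $f,g\in L^2(E,\mu)$ (where it is just the statement that $\widehat T_t$ is the $L^2$-adjoint of $T_t$) and then extend: fix $f\in L^1(E,\mu)\cap L^2(E,\mu)$ with $f\ge 0$ and take $g=1$, approximating $1$ from below by $g_n\in L^2\cap L^\infty$, $g_n\nearrow 1$; then $T_tg_n\nearrow T_t1$ $\mu$-a.e. by the very definition of the $L^\infty$-extension of $T_t$ recalled just before Definition \ref{conservative}, and monotone convergence (on the right, using $f\ge 0$) together with $L^2$-continuity of $\widehat T_t$ (on the left) gives
\[
\int_E \widehat T_t f\, d\mu = \int_E f\, (T_t 1)\, d\mu \qquad\text{for all } f\in L^1(E,\mu)\cap L^2(E,\mu),\ f\ge 0 .
\]
Now the equivalence is immediate. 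If $T_t1 = 1$ $\mu$-a.e., the displayed identity reads $\int_E \widehat T_t f\, d\mu = \int_E f\, d\mu$ for all non-negative $f\in D:=L^1\cap L^2$, and by linearity ($f = f^+ - f^-$) for all $f\in D$, hence by the density reduction above for all $f\in L^1(E,\mu)$. Conversely, suppose \eqref{tt2} holds; by the density reduction it holds for all $f\in L^1(E,\mu)$, in particular for all non-negative $f\in L^1\cap L^2$, so the displayed identity forces $\int_E f\,(1 - T_t 1)\, d\mu = 0$ for every such $f$. Since $(\widehat T_t)_{t>0}$ is sub-Markovian, $T_t1\le 1$ $\mu$-a.e., so $1 - T_t1\ge 0$; testing against $f = \mathbf 1_K$ for compact sets $K$ (which lie in $L^1\cap L^2$ because $\mu$ is locally finite) and letting $K$ exhaust $E$ along a countable exhaustion (available since $E$ is locally compact and separable), we conclude $1 - T_t1 = 0$ $\mu$-a.e., i.e. $T_t$ is conservative.

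The only genuinely delicate point is the passage from the $L^2$-adjoint relation to the pairing of $L^1$ with $L^\infty=(L^1)'$ against the \emph{extended} semigroup $T_t$ on $L^\infty$, and in particular the interchange of limit and integral that identifies $\int_E f\,(T_t1)\,d\mu$; this is exactly where sub-Markovianity of $(\widehat T_t)_{t>0}$ (guaranteed from (H1) onward, as noted in the text) enters to make all the quantities non-negative so that monotone convergence applies, and where the assumption that $\mu$ has full support and is locally finite is used to produce the exhausting compact sets that let us test ``pointwise $\mu$-a.e.''. Everything else is bookkeeping with densities and the semigroup law.
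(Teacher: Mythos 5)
Your proposal is correct, and it is worth comparing with the paper's proof because the emphasis is placed differently. The paper declares the equivalence between conservativeness and (\ref{tt2}) at a \emph{fixed} $t$ to be obvious and devotes its entire proof to the ``for some (and hence any) $t$'' clause; you do the opposite, spelling out the duality pairing $\int_E \widehat T_t f\,g\,d\mu=\int_E f\,T_tg\,d\mu$ and its monotone extension to $g=1$ in full (this is indeed exactly the content the paper is suppressing, and your treatment of it --- including the density reduction and the test against indicators of compacts using $1-T_t1\ge 0$ --- is sound). For the ``some implies any'' step your route is genuinely different: you use that $T_{t+s}1=T_t(T_s1)\le T_t1$, i.e.\ monotonicity of $t\mapsto T_t1$ coming from sub-Markovianity and the semigroup law, so that $T_{t_0}1=1$ forces $T_s1=1$ for all $s\le t_0$ and then, by closure under addition, for all $s>0$. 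The paper instead argues by contradiction: assuming $\lim_n T_sf_n<1$ on a set $A$ of finite positive measure for some $s<t$, it derives $\int_E\widehat T_t1_A\,d\mu\le\int_E\widehat T_s1_A\,d\mu<\int_E 1_A\,d\mu$ from the $L^1$-contractivity and positivity of $(\widehat T_t)_{t>0}$, contradicting the invariance at time $t$. Your direct monotonicity argument is shorter and avoids the detour through $\widehat T$; the paper's argument has the mild advantage of working entirely on the $\widehat T$-side, which is the side on which the localization (H1) and the rest of section 2 operate. One cosmetic slip: the inequality $T_t1\le 1$ follows from sub-Markovianity of $(T_t)_{t>0}$ itself (via the approximating sequence $0\le f_n\le 1$), not of $(\widehat T_t)_{t>0}$ as you write; the fact is true and nothing in the argument is affected.
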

\begin{proof}
Since the first statement is obvious, we only show that if (\ref{tt2}) (hence equivalently (\ref{tt1})) holds for some $t>0$, then it holds for all $t>0$.
Assume hence that
$$
T_t 1=1\ \mu\text{-a.e. for some } t>0.
$$
Let $(f_n)_{n\geq 1}\subset L^2(E,\mu)\cap L^\infty(E,\mu)$, $0\leq f_n\nearrow 1$ as $n\to \infty$. Then by definition we obtain
$$
\lim_{n\to\infty}T_t f_n=1,\ \mu\text{-a.e.}
$$
Let $t,s>0$. Since $T_{t+s}f_n =T_t(T_s f_n)$, it suffices to show that
$$
\lim_{n\to \infty}T_sf_n=T_s1 =1, \ \mu\text{-a.e.}
$$
for any $0<s<t$. Let $0<s<t$ and suppose that we do not have
$$
\lim_{n\to \infty}T_s f_n= 1,\ \mu\text{-a.e.}
$$
Then there exists a measurable set $A$ with $0<\mu(A)<\infty$ such that
$$
\lim_{n\to \infty}T_sf_n <1\ \text{on }A.
$$
Since $\tht$ is an $L^1(E,\mu)$ contraction,
$$
\int_E \widehat{T}_t 1_A d\mu\leq\int_E \widehat{T}_s 1_A d\mu =\lim_{n\to \infty}\int_E \widehat{T}_s 1_A f_n d\mu= \lim_{n\to \infty} \int_A T_s f_n d\mu< \int_E 1_A d\mu
$$
which leads to the contradiction.
\end{proof}
\centerline{}
Fix $t>0$. From now on until the end of section \ref{2}, we assume:
\begin{itemize}
	\item[(H1)] Let $(V_n)_{n \geq 1}$ be an arbitrary increasing sequence of relatively compact open sets in $E$ such that $\overline{V}_n\subset V_{n+1}$ and $\cup_{n\geq 1}V_n=E$. Then for $p=1$ or $p=2$, there exist sub-Markovian $C_0$-semigroups of contractions $(\widehat{T}^n_t)_{t>0}$, $n\geq 1$ on $L^p(V_n,\mu)$  with generators $(\widehat{L}^n,D(\widehat{L}^n))$, $n\geq 1$, such that for any non-negative $f\in L^1(E,\mu)\cap L^\infty(E,\mu)$,
$$
\widehat{T}^n_t f:=\widehat{T}_t^n (f\cdot 1_{V_n}) \nearrow \widehat{T}_t f\  \mu\text{-a.e. as }n\to \infty.
$$
	\end{itemize}
Next, we aim to give a general criterion for conservativeness in case the generalized Dirichlet form can be represented locally by a linear perturbation of a symmetric strongly local regular Dirichlet form. By the latter, we mean that there exist a symmetric strongly local regular Dirichlet form $(\E^0,D(\E^0))$ on $L^2(E,\mu)$ in the sense of \cite[I.1.1]{FOT}, expressed as
$$
\E^0(u,v)=\int_E \Gamma(u,v)(x)\mu(dx), \text{ for } u,v\in D(\E^0),
$$
where $\Gamma$ is a positive semidefinite symmetric bilinear form on $D(\E^0)$ with values in $L^1(E,\mu)$ (see \cite{BH}) such that for each $n\geq 1$, $D(\widehat{L}^n)_b \subset D(\E^0)_b$ and there exist a linear operator $N:D(N)\longrightarrow L^1(E,\mu)_{loc}$ on $L^2(E,\mu)$ with $D(\E^0)_{b} \subset D(N)$ such that
\begin{equation}\label{euv}
-\int_{V_n} \widehat{L}^nvud\mu=\int_E \Gamma(u,v)d\mu+\int_{V_n} u Nv d\mu
\end{equation}
for any $v\in D(\widehat{L}^n)_b$ and $u\in D(\E^0)$ and $D(N)$ contains $u\cdot 1_{V_n}$ where $u\in D(\E^0)_{loc,b}$. Here the term strongly local means that $\E^0(u,v)=0$ whenever $u$ is a constant on a neighborhood of supp($v$). The linear operator $(N,D(N))$ needs not to be a generator of a $C_0$-semigroup of contractions on $L^2(E,\mu)$, but satisfies 
\begin{equation}\label{loccon}
v \in D(\E^0)_b, v=\text{const. } \mu\text{-a.e. on } B\in \B(E)\Rightarrow  Nv=0 \ \mu\text{-a.e. on }B,
\end{equation}
\begin{equation}\label{n1}
\int_E Nv d\mu= 0\ \text{for any }v\in D(\E^0)_{0,b},
\end{equation}
\begin{equation}\label{n2}
N\phi(v)=\phi'(v)Nv, \ N(uv)=vNu+uNv  \quad\text{for any } u,v\in D(N)_b, \ \phi\in C_b^1(\R).
\end{equation}
Since $D(\E^0)_{0,b}$ forms an algebra of functions (see for instance \cite[I. Corollary 4.15.]{MR}), we obtain form (\ref{n1}) and (\ref{n2}) that 
\[
\int_E u Nu\, d\mu= 0\ \text{for any }u\in D(\E^0)_{0,b}.
\]
From now on until the end of section \ref{2}, we assume that the following condition holds:
\begin{itemize}
\item[(H2)] for each $n\geq 1$, $(\widehat{L}^n,D(\widehat{L}^n))$ can be represented as in (\ref{euv}) and $(N,D(N))$ satisfies (\ref{loccon}), (\ref{n1}) and (\ref{n2})
\end{itemize}
For the convenience of the reader, we recall here some basic properties of strongly local regular Dirichlet forms, which can be represented by a carr\'e du champ. For any $u\in D(\E^0)$, there is a unique finite measure $\mu_{\langle u \rangle}$ on $E$ called the  energy measure of $u$ such that
$$
\int_E d\mu_{\langle u \rangle}=2\E^0(u,u)
$$
and if $u\in D(\E^0)_b$, then we get
$$
\int_E f d\mu_{\langle u \rangle}=2\E^0(u,fu)-\E^0(u^2,f),
$$
for any $f\in C_b(E)\cap D(\E^0)$.
Then $\mu_{\langle u,v \rangle}$, $u,v\in D(\E^0)$, is defined by polarization, i.e.
$$
\mu_{\langle u,v \rangle}:=\frac{1}{2}\left (\mu_{\langle u+v \rangle}-\mu_{\langle u \rangle}-\mu_{\langle v \rangle}\right )
$$
Since $\mu_{\langle u,v \rangle}$ is bilinear in $u,v$ and $\mu_{\langle u \rangle}$ is positive, we obtain for non-negative $f\in C_b(E)\cap D(\E^0)$
$$
\left | \left( \int_E f \mu_{\langle u \rangle} \right)^{1/2} - \left ( \int_E f \mu_{\langle v \rangle} \right )^{1/2} \right | \leq\left( \int_E f d\mu_{\langle u-v\rangle }   \right)^{1/2}.
$$
The energy measure then satisfies for any $u,v \in D(\E^0)$
$$
\int_E f d\mu_{\langle u,v \rangle}=2 \int_E f \Gamma(u,v)d\mu, \quad f \in C_b(E)\cap D(\E^0).
$$
Since $(\E^0,D(\E^0))$ is strongly local, the energy measures $\mu_{\langle u,v \rangle}$, $u,v\in D(\E^0)$, are strongly local and satisfy the Leibniz and the chain rules. In particular, $\mu_{\langle u \rangle}$ can be extended to $u\in D(\E^0)_{loc}$  and $\Gamma(u,v)$ satisfies the Leibniz and Chain rules (see \cite{FOT} and \cite{Stu1}).\\
\centerline{}
We assume from now on until the end of section \ref{2} that
\begin{itemize}
	\item[(H3)]
there exists a non-negative continuous function $\rho$ on $E$ with
$$
\rho \in D(\E^0)_{loc}
$$
such that for $r>0$
$$
E_r:=\{x\in E:  \rho(x)< r\}
$$
is a relatively compact open set in $E$ and $\cup_{r> 0} E_r =E$. Furthermore, there exists a compact subset $K_0$ of $E$ such that
$$
\Gamma (\rho,\rho),\ N(\rho)\in L^\infty_{loc}(K_0^c,\mu).
$$
\end{itemize}
\centerline{}
\begin{rem}\label{rem3}
Let $(\E,\F)$ be a symmetric strongly local and regular Dirichlet form. Then we may define the part Dirichlet forms $(\E^n,\F^n)$ corresponding to an increasing sequence of relatively compact open sets  $(V_n)_{n\geq 1}$ such that $\cup_{n\geq 1}V_n=E$ where $\F^n=\{ u\in \F : \widetilde{u}=0$ q.e. on $V_n^c \}$ and $\widetilde{u}$ is a quasi continuous version of $u\in \F$ (see \cite[Theorem 4.4.5]{FOT}), i.e.  $\widetilde{u}=0$ up to a capacity zero set on $V_n^c$. Denote the associated semigroups of $(\E^n,\F^n)$ by $(T_t^n)_{t>0}$ and the associated linear operators by $(L^n,D(L^n))$ on $L^2(V_n,\mu)$. Then, obviously $(T_t^n)_{t>0}$ and $(L^n,D(L^n))$ are symmetric. We know that
$$
T_t f=\lim_{n \to \infty}T_t^n f \ \mu\text{-a.e.}
$$
for any $f\in L^2(E,\mu)$ where $T_t^n f:= T_t^n(f\cdot 1_{V_n})$. In particular, if $f$ is non-negative, then $T_t^n f \nearrow T_t f$ $\mu$-a.e. as $n\to \infty$. Moreover, as explained before ($\E,\F$) can be represented by a carr\'e du champ. Thus $(\E,\F)$ satisfies (H1) with $p=2$. Furthermore, for $v\in D(L^n)$,
$$
\E(u,v)=(-L^nv,u), \quad\text{for any }u\in \F
$$
which implies that (\ref{euv}) holds. Putting $N\equiv 0$ implies that (H2) holds. Moreover, if the topology induced by the intrinsic metric $d^{int}$ defined by
$$
d^{int}(x,y):=\sup\left \{u(x)-u(y): u\in \F_{loc}\cap C(E),\ \Gamma(u,u)\leq 1 \ \text{on } E \right \}
$$
introduced in \cite{Stu1} is equivalent to the original topology on $E$ and any balls induced by the intrinsic metric are relatively compact open sets, then we may choose $\rho(x)=d^{int}(x,x_0)$ for some fixed $x_0\in E$ (see \cite[Lemma 1]{Stu1}). Hence (H3) holds.
\end{rem}
\centerline{}
By assumption (H3),
\begin{equation}\label{E4n}
V_n:=E_{4n}, n\geq 1,
\end{equation}
are relatively compact open subsets of $E$ with $\bigcup_{n\geq 1} V_n=E$. From now on fix $(V_n)_{n\geq 1}$ as in (\ref{E4n}) and note that (H1) and (H2) hold for this choice of $(V_n)_{n\geq 1}$. For a function $f$ which has compact support, define
\begin{equation}\label{kf}
k_f:=\min\{ m\in \N: \text{supp}(f)\subset E_m\text{ and }K\subset E_m \},
\end{equation}
where $K$ is an arbitrary but fixed compact subset of $E$ containing $K_0$ as in (H3). Let
\begin{equation}\label{D0}
D_0:=\{ f: f\in L^\infty(E,\mu)\cap L^2(E,\mu)_0 \text{ such that } \widehat{T}_s^n f \in D(\widehat{L}^n) \text{ for any }n\geq k_f,\ s\in [0,t] \}.
\end{equation}
In order to perform comfortably our calculations up to the formulation and proof of Theorem \ref{thm6} below, we do need the following auxiliary assumption

\begin{itemize}
\item[(A)] there exists $f\in D_0$ such that supp($f)\neq \emptyset$.
\end{itemize}
\begin{rem}\label{operatordense}
Assumption (A) will be replaced by the stronger (H4) occurring right after the proof of Theorem \ref{thm6} below.
Note that if $(\widehat{T}_t^n)_{t>0}$, $n\geq 1$ are analytic, then $\widehat{T}_s^n f\in D(\widehat{L}^n)$ for $f\in L^1(E,\mu)\cap L^2(E,\mu)$. Thus (A) and (H4) below trivially hold. In the non-sectorial (i.e. non-analytic) case, we can impose the reasonable assumption that
the coefficients of the generators of $(\widehat{T}_t^n)_{t>0}$, $n\ge 1$, are $p$-fold integrable with respect to the measure $\mu$, where $p$ is as in (H1). Then $C_0^{\infty}(E)\subset D_0$ for instance in the case where $E:=\R^d$ and there are no boundary conditions (cf. \ref{ex16} and \ref{ex1}). In particular, (H4)
below is then also automatically satisfied. Similarly, one can easily obtain nice dense subsets of $D_0$ in case of boundary conditions provided the coefficients are not too singular. To keep this exposition reasonably sized and because of the similarity to the case without boundary conditions, we did not include an example.
\end{rem}
\begin{lem}\label{lem4}
Let $D\subset D_0$ be an arbitrary dense subset of $L^1(E,\mu)$. Then $\tt$ is conservative, if and only if there exists a sequence of functions $(\chi_n)_{n\geq 1} \subset L^2(V_n,\mu)$ such that $0\leq \chi_n \nearrow 1$ as $n\to \infty$ and
$$
\lim_{n\to \infty} \left [ \int_0^t \int_E \frac{d}{ds} \widehat{T}_s^n f\cdot \chi_n d\mu ds \right ] =0
$$
for any $f\in D$ and some (and hence all) $t>0$.
\end{lem}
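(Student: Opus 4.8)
The plan is to use Lemma \ref{rmk1}: conservativeness of $\tt$ is equivalent to $(\widehat{T}_t)_{t>0}$-invariance of $\mu$, i.e. $\int_E \widehat{T}_t f\, d\mu = \int_E f\, d\mu$ for all $f$ in some dense subset of $L^1(E,\mu)$, which we may take to be the given $D \subset D_0$. So it suffices to rewrite the difference $\int_E \widehat{T}_t f\, d\mu - \int_E f\, d\mu$ in a way that brings in the localized semigroups $(\widehat{T}_s^n)_{t>0}$ and the cut-off functions $\chi_n$.

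First I would fix $f \in D$, which is non-negative without loss of generality (decompose into positive and negative parts and use linearity, or note $f \ge 0$ suffices by the usual argument). By (H1), $\widehat{T}_s^n f := \widehat{T}_s^n(f \cdot 1_{V_n}) \nearrow \widehat{T}_s f$ $\mu$-a.e. as $n \to \infty$, and these functions are dominated appropriately; since $\widehat{T}_s$ is an $L^1$-contraction and $f$ has compact support, monotone/dominated convergence gives $\int_E \widehat{T}_t^n f\, d\mu \to \int_E \widehat{T}_t f\, d\mu$. Next, for $f \in D_0$ with $n \ge k_f$, the function $s \mapsto \widehat{T}_s^n f$ is a $C^1$ curve in $L^2(V_n,\mu)$ (since $\widehat{T}_s^n f \in D(\widehat{L}^n)$ and $\frac{d}{ds}\widehat{T}_s^n f = \widehat{L}^n \widehat{T}_s^n f$), so by the fundamental theorem of calculus, for any $\chi_n \in L^2(V_n,\mu)$,
\[
\int_E \widehat{T}_t^n f \cdot \chi_n\, d\mu - \int_E (f\cdot 1_{V_n}) \cdot \chi_n\, d\mu = \int_0^t \int_E \frac{d}{ds}\widehat{T}_s^n f \cdot \chi_n\, d\mu\, ds.
\]
Now choose $\chi_n \nearrow 1$ as in the statement (for instance the standard cut-off functions, which exist under (H3)); then $\int_E (f \cdot 1_{V_n})\chi_n\, d\mu \to \int_E f\, d\mu$ and $\int_E \widehat{T}_t^n f \cdot \chi_n\, d\mu \to \int_E \widehat{T}_t f\, d\mu$ as $n \to \infty$ (here one must check that replacing $\widehat{T}_t^n f$ by $\widehat{T}_t^n f \cdot \chi_n$ does not change the limit, which follows from $0 \le \chi_n \le 1$, $\chi_n \to 1$, and the $L^1$-convergence $\widehat{T}_t^n f \to \widehat{T}_t f$). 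Putting these together, $\int_E \widehat{T}_t f\, d\mu - \int_E f\, d\mu = \lim_{n\to\infty}\int_0^t \int_E \frac{d}{ds}\widehat{T}_s^n f\cdot \chi_n\, d\mu\, ds$, so conservativeness holds if and only if this limit is $0$ for all $f \in D$; the passage from "some $t$" to "all $t$" is already contained in Lemma \ref{rmk1}.

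The main obstacle I anticipate is the careful justification of the two limit interchanges: that the cut-off $\chi_n$ may be freely inserted in $\int_E \widehat{T}_t^n f\cdot\chi_n\, d\mu$ without disturbing the limit, and that $\int_E \widehat{T}_t^n f \, d\mu$ really converges to $\int_E \widehat{T}_t f\, d\mu$ rather than to something smaller (the localized semigroups $\widehat{T}_t^n$ need not be conservative, so this is exactly where (H1) does the work). One also needs that the identity above is valid with the single curve $s \mapsto \widehat{T}_s^n f$ living in $L^2(V_n,\mu)$ and $\chi_n \in L^2(V_n,\mu)$, so that the pairing $\int_E \frac{d}{ds}\widehat{T}_s^n f\cdot\chi_n\, d\mu$ makes sense and is continuous in $s$, permitting Fubini/FTC on $[0,t]$. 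None of this is deep, but it is the part that requires attention; the equivalence itself is then essentially bookkeeping on top of Lemma \ref{rmk1}.
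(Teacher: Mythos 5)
Your proposal is correct and follows essentially the same route as the paper: reduce via Lemma \ref{rmk1} to $(\widehat{T}_t)_{t>0}$-invariance on the dense set $D$, use (H1) together with $0\le\chi_n\nearrow 1$ to identify $\int_E(\widehat{T}_tf-f)\,d\mu$ with $\lim_{n\to\infty}\int_E(\widehat{T}_t^nf-f)\chi_n\,d\mu$, and rewrite the latter by the fundamental theorem of calculus for $s\mapsto\widehat{T}_s^nf$ (valid since $f\in D_0$). The paper compresses all of this into the single display (\ref{efl}); your additional care about the limit interchanges and the role of monotonicity is exactly the bookkeeping that display leaves implicit.
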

\begin{proof}
Let $f\in D$ and $(\chi_n)_{n\geq 1}$ be as in the statement. Then by (H1)
\begin{equation}\label{efl}
\int_E \left (\widehat{T}_t f-f \right ) d\mu= \lim_{n\to \infty} \int_E ( \widehat{T}_t^n f-f)  \chi_n d\mu=\lim_{n\to \infty} \left [ \int_0^t \int_E \frac{d}{ds} \widehat{T}_s^n f \cdot \chi_n d\mu ds \right]
\end{equation}
for any $f \in D$ and the assertion follows by Lemma \ref{rmk1}.\\
\end{proof}
\centerline{}
Now we are looking for a more explicit criterion for conservativeness of $\tt$.\\
\centerline{}
From now on unless otherwise stated let us fix $f$ as in (A). Let for $n\geq 1$,
\begin{equation}\label{chin}
\chi_n(x):=1\wedge \left(2-\frac{\rho(x)}{2n} \right )^+
\end{equation}
and $\phi: \R^+ \to \R^+$ in $C^1(\R^+)$ be increasing and such that $\phi(0)=0$ and $\phi(r)\nearrow +\infty$ as $r \nearrow +\infty$. Then define for each $n\geq 1$,
\begin{equation}\label{psin0}
\psi_n(x):= (\phi(\rho(x))-\phi(k_f))^+ \wedge (\phi(4n)-\phi(k_f))^+ .
\end{equation}
Note that $(\chi_n)_{n\geq 1}\subset D(\E^0)_{0,b}$ by (H3) and that $\psi_n\in D(\E^0)_{loc}$. The latter can be seen with the help of \cite[p. 190 vi)]{Stu1}. Now we will use the method of Davies, Oshima and Uemura. Let
\begin{equation}\label{psin}
\widehat{T}_s^{\psi_n}f:= e^{\psi_n}\widehat{T}_s^n(f e^{-\psi_n}).
\end{equation}
Then $\widehat{T}_s^{\psi_n}f \in D(\E^0)\cap L^\infty(V_n,\mu)$ with $\widehat{T}_s^{\psi_n}f=0 $ on $V_n^c$ for any $s>0$, and $\widehat{T}_s^{\psi_n}f=e^{\psi_n}\widehat{T}_s^n f$ for any $n\geq 1$ because $\psi_n\equiv 0$ on $E_{k_f}$ for any $n\geq 1$. For $t>0$, let
$$
\widehat{v}_t:=\int_0^t \widehat {T}_s^{\psi_n} f ds.
$$
Let $n\geq k_f$. By Leibniz and chain rules for $\Gamma$ and $N$, (\ref{euv}), (\ref{loccon}), (\ref{n1}) and Fubini, we obtain that
\begin{eqnarray}\label{ineq1}
&& \left | \int_0^t \int_E \frac{d}{ds}\widehat{T}_s^nf \cdot \chi_n d\mu ds\right |
= \left | \int_{V_n} \widehat{L}^n\left (\int_0^t  \widehat{T}_s^n fds\right ) \cdot \chi_n d\mu\right | \nonumber\\
& = &\left | - \int_{V_n}\Gamma\left (\chi_n, e^{-\psi_n}\vt\right) d\mu -\int_{V_n} \chi_n N\left(e^{-\psi_n}\vt \right) d\mu\right |\nonumber  \\
&= &\left |  \int_{V_n} \left (\Gamma (\chi_n,\psi_n) +N(  \chi_n) \right ) e^{-\psi_n} \vt d\mu - \int_{V_n} \Gamma \left(\chi_n, \vt\right ) e^{-\psi_n}d\mu   \right |\nonumber\\
&\leq&\frac{e^{\phi(k_f)-\phi(2n)}}{2n} \left[ \int_{E_{4n}\setminus E_{2n}} \left |\phi'(\rho)\Gamma(\rho,\rho) +N(\rho) \right | \cdot |\vt| d\mu+\left |\int_{E_{4n}\setminus E_{2n}} \Gamma\left (\rho,\vt \right)  d\mu \right | \right ]\nonumber \\
&\leq &  \frac{e^{\phi(k_f)-\phi(2n)}}{2n} \Bigg [\left \{ \left(\int_{E_{4n}\setminus E_{2n}}  \left( \phi'(\rho)\Gamma(\rho,\rho) \right)^2 d\mu \right)^{1/2}+ \left(\int_{E_{4n}\setminus E_{2n}}  \left(N(\rho)\right) ^2 d\mu \right)^{1/2} \right \} \| \vt \|_{L^2(V_n,\mu)}\nonumber \\
&  &\qquad\qquad\qquad \qquad +\left (\int_{E_{4n}\setminus E_{2n}} \Gamma(\rho,\rho)d\mu \right )^{1/2} \left( \int_{V_n} \Gamma\left( \vt,\vt \right)d\mu \right)^{1/2}\Bigg] \nonumber  \\
&\leq  &\frac{e^{\phi(k_f)-\phi(2n)}}{2n} \Bigg [\left(\esssup_{E_{4n}\setminus E_{2n}} \left( \phi'(\rho)\Gamma(\rho,\rho)\right) \mu({E_{4n}\setminus E_{2n}})^{1/2}+ \| N(\rho)\|_{L^2(E_{4n}\setminus E_{2n},\mu)} \right)\| \vt \|_{L^2(V_n,\mu)}\nonumber \\
& &\qquad\qquad\qquad \qquad + {\esssup_{E_{4n}\setminus E_{2n}}  \Gamma(\rho,\rho)}^{1/2} \mu({E_{4n}\setminus E_{2n}})^{1/2} \left( \int_{V_n} \Gamma\left( \vt,\vt \right)d\mu \right)^{1/2} \Bigg] \nonumber  \\
&\leq & \frac{e^{\phi(k_f)-\phi(2n)} }{2n} \Bigg ( \mu(E_{4n}\setminus E_{2n})^{1/2}\left( \sqrt{a_n}\E^0 \left( \vt,\vt\right)^{1/2}+ b_n\| \vt \|_{L^2(V_n,\mu)} \right ) \nonumber \\
& &\qquad\qquad\qquad \qquad \qquad\qquad\qquad \qquad\qquad\qquad+{\| N(\rho)\|_{L^2(E_{4n}\setminus E_{2n},\mu)}} \| \vt \|_{L^2(V_n,\mu)} \Bigg)
\end{eqnarray}
where
\begin{equation}\label{an}
a_n:= \esssup_{E_{4n}\setminus E_{2n}} \Gamma(\rho,\rho)
\end{equation}
and
\begin{equation}\label{bn}
b_n:=\esssup_{E_{4n}\setminus E_{2n}}\phi'(\rho) \Gamma(\rho,\rho).
\end{equation}
Since $\Gamma$ is positive semidefinite and $\phi$ is increasing, $a_n$ and $b_n$ are nonnegative and well-defined by (H3) and (\ref{kf}). Now, we are going to find the following estimates in (\ref{ineq1})
$$
\left \| \vt \right\|_{L^2(V_n,\mu)} \leq t e^{c_n(f)t}\|f \|_{L^2(E,\mu)}
$$
and
$$
\E^0\left( \vt,\vt \right)^{1/2}\leq \sqrt{3t} e^{c_n(f) t} \|f \|_{L^2(E,\mu)}
$$
where
\begin{equation}\label{cn}
c_n(f):=\esssup_{E_{4n}\setminus E_{k_f}} \left |(\phi'(\rho))^2 \Gamma(\rho,\rho)+\phi'(\rho)N(\rho)\right |.
\end{equation}
Note that  $c_n(f)$ is well-defined by (H3) and (\ref{kf}) and depends on $f$ since the essential supremum is taken over $E_{4n}\setminus E_{k_f}$. Since $N$ satisfies (\ref{n1}) and (\ref{n2}), we obtain the following lemma which is the key lemma of this section. \\
\begin{lem}
Let $V$ be a relatively compact open set in $E$, $u\in D(\E^0)_{0,b}$ with supp$(u)\subset \overline{V}$ and $\psi \in D(\E^0)_{loc,b}$.  Then $e^{\pm \psi}u\in D(\E^0)_b\subset D(N)_b$ and
\begin{equation}\label{key}
\E^0(e^{\psi}u,e^{-\psi}u)+\int_V e^{\psi}u \cdot N(e^{-\psi}u) d\mu\geq \E^0(u,u)-c\int_V u^2 d\mu,
\end{equation}
where
$$
c:=\esssup_{V} \left |\Gamma (\psi,  \psi )+   N(\psi)  \right |.
$$
\end{lem}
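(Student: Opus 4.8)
The idea is to expand the left-hand side of (\ref{key}) using the Leibniz and chain rules for $\Gamma$ and $N$, and then to complete the square, discarding a nonnegative term. First I would note that $e^{\pm\psi}u \in D(\E^0)_b$: since $\psi \in D(\E^0)_{loc,b}$ and $u\in D(\E^0)_{0,b}$ has compact support contained in $\overline V$, the product $e^{\pm\psi}u$ agrees on a neighbourhood of $\mathrm{supp}(u)$ with $e^{\pm\psi_V}u$ for a bounded representative $\psi_V \in D(\E^0)_b$, and then the chain rule ($\phi(\psi_V)$ with $\phi(r)=e^{\pm r}$ truncated outside a bounded interval) together with the Leibniz rule gives $e^{\pm\psi}u\in D(\E^0)_b$; by (H2), $D(\E^0)_b\subset D(N)_b$. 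Strong locality of $\E^0$ (and of $N$ via (\ref{loccon})) means the integrals only see the behaviour on $V$.

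For the $\Gamma$-part, the bilinearity and Leibniz/chain rules give, $\mu$-a.e. on $V$,
\[
\Gamma(e^{\psi}u, e^{-\psi}u) = \Gamma(u,u) - u^2\,\Gamma(\psi,\psi),
\]
because the two cross terms $\pm\, u\,\Gamma(\psi,u)$ cancel and $\Gamma(e^{\psi},e^{-\psi}) = -\Gamma(\psi,\psi)$ after factoring out $e^{\psi}e^{-\psi}=1$. For the $N$-part, using $N(uv)=uNv+vNu$ and $N\phi(v)=\phi'(v)Nv$ from (\ref{n2}),
\[
e^{\psi}u\,N(e^{-\psi}u) = e^{\psi}u\bigl(e^{-\psi}Nu - e^{-\psi}u\,N\psi\bigr) = uNu - u^2 N\psi.
\]
Integrating over $V$ and adding, the left side of (\ref{key}) equals
\[
\E^0(u,u) + \int_V uNu\,d\mu - \int_V u^2\bigl(\Gamma(\psi,\psi) + N\psi\bigr)\,d\mu.
\]
By the remark just before (H3), $\int_V uNu\,d\mu = \int_E uNu\,d\mu = 0$ for $u\in D(\E^0)_{0,b}$ (this uses (\ref{n1}), (\ref{n2}) and that $D(\E^0)_{0,b}$ is an algebra). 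Finally, bounding $\int_V u^2(\Gamma(\psi,\psi)+N\psi)\,d\mu \le \esssup_V|\Gamma(\psi,\psi)+N\psi|\int_V u^2\,d\mu = c\int_V u^2\,d\mu$ yields (\ref{key}). Note that in fact the identity version (with $c$ replaced by the exact integrand) holds, so the inequality is a harmless relaxation; one could even hope for equality, but the $\esssup$ form is what is needed downstream in (\ref{ineq1}).

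The main obstacle is purely a matter of care rather than depth: justifying that all the chain/Leibniz manipulations are legitimate for elements of $D(\E^0)_{loc,b}$ rather than $D(\E^0)_b$, and that the products land in the right function spaces so that $N$ may be applied and (\ref{n1}) invoked. This is where strong locality of $\E^0$ and property (\ref{loccon}) of $N$ do the work: since $u$ has compact support in $\overline V$, only a bounded-on-$V$ representative of $\psi$ matters, and outside $\overline V$ everything vanishes, so no global integrability of $\Gamma(\psi,\psi)$ or $N\psi$ is required. Once the localisation bookkeeping is set up, the algebraic identity and the final estimate are immediate.
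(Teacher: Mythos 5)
Your proposal is correct and follows essentially the same route as the paper's proof: expand $\Gamma(e^{\psi}u,e^{-\psi}u)$ and $e^{\psi}u\,N(e^{-\psi}u)$ via the Leibniz and chain rules, observe the cross terms cancel and that $\int uNu\,d\mu=0$ by (\ref{n1})--(\ref{n2}), arriving at the identity $\E^0(u,u)-\int_V u^2(\Gamma(\psi,\psi)+N\psi)\,d\mu$, and then bound by the essential supremum. The paper states this more tersely (handling $e^{\pm\psi}u\in D(\E^0)_b$ via $(e^{\pm\psi}-1)u\in D(\E^0)_b$ rather than your localisation argument), but the content is the same.
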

\begin{proof} $e^{\pm \psi} u\in D(\E^0)_b$ follows since $(e^{\pm \psi}-1)u\in D(\E^0)_b$. Since $(N,D(N))$ satisfies (\ref{n1}) and (\ref{n2}) and $\Gamma$ satisfies the Leibniz rule,
\begin{eqnarray*}
\E^0(e^{\psi}u,e^{-\psi}u)+\int_V e^{\psi}u \cdot N(e^{-\psi}u) d\mu
&=&\E^0(u,u)-\int_V \left (\Gamma(\psi,\psi)+N(\psi) \right) u^2 d\mu\\
&\geq& \E^0(u,u)-c \int_V u^2 d\mu.
\end{eqnarray*}
\end{proof}
\centerline{}
For $s>0$, we have by (\ref{euv})
\begin{align*}
\frac{1}{2}\frac{d}{ds}\| \widehat{T}_s^{\psi_n}f \|_{L^2(V_n,\mu)}^2
&=\int \widehat{L}^n( \widehat{T}_s^n f )\cdot  e^{\psi_n} \widehat{T}_s^{\psi_n}f d\mu\\
&=-\E^0\left(e^{\psi_n} \widehat{T}_s^{\psi_n} f, e^{-\psi_n} \widehat{T}_s^{\psi_n}f\right)-\int_{V_n} e^{\psi_n} \widehat{T}_s^{\psi_n} f \cdot N(e^{-\psi_n} \widehat{T}_s^{\psi_n} f)d\mu.
\end{align*}
Replacing $u$ by $\widehat{T}_s^{\psi_n}f$, $s>0$ and $\psi$ by $\psi_n$ in (\ref{key}), we obtain
$$
\frac{1}{2}\frac{d}{ds}\| \widehat{T}_s^{\psi_n}f \|_{L^2(V_n,\mu)}^2\leq-\E^0\left( \widehat{T}_s^{\psi_n}f,\widehat{T}_s^{\psi_n}f \right)+c_n(f) \int_{V_n} \left ( \widehat{T}_s^{\psi_n}f \right)^2 d\mu.
$$
Consequently, $\frac{d}{ds}\| \widehat{T}_s^{\psi_n}f \|_{L^2(V_n,\mu)}^2  \leq 2c_n(f) \| \widehat{T}_s^{\psi_n}f \|_{L^2(V_n,\mu)}^2$, i.e.
$$
\left \| \widehat{T}_s^{\psi_n}f \right\|_{L^2(V_n,\mu)} \leq  e^{c_n(f)s}\|f \|_{L^2(E,\mu)}.
$$
By Fubini and Jensen,
\begin{align*}
\left \| \vt \right \|_{L^2(V_n,\mu)}^2
&\leq t \int_0^t \int \left(\widehat{T}_s^{\psi_n}f \right )^2d\mu ds \leq t \int_0^t e^{2c_n(f) s}ds \|f\|_{L^2(E,\mu)}^2
\end{align*}
Hence,
\begin{equation}\label{ineq2}
\| \vt \|_{L^2(V_n,\mu)}^2\leq \frac{t}{2c_n(f)}\left ( e^{2c_n(f) t}-1 \right)  \|f\|_{L^2(E,\mu)}^2\ \text{ and }\ \| \vt \|_{L^2(V_n,\mu)}^2\leq t^2 e^{2c_n(f) t}  \|f\|_{L^2(E,\mu)}^2.
\end{equation}
Next, using (\ref{key}) again we obtain
\begin{align*}
\left( \vt, \widehat{T}_t^{\psi_n}f \right ) -\left(\vt , f\right)
&= \int \int_0^t \frac{d}{du}\widehat{T}_u^{n}f du \cdot e^{\psi_n}\int_0^t \widehat {T}_s^{\psi_n} f ds d\mu\\
&= \int  \widehat{L}^n \left( \int_0^t \widehat{T}_u^{n}f du\right ) \cdot e^{\psi_n}\int_0^t \widehat {T}_s^{\psi_n} f ds d\mu\\
&=-\E^0 \left(e^{\psi_n} \int_0^t \widehat {T}_s^{\psi_n} f ds , e^{-\psi_n} \int_0^t \widehat{T}_u^{\psi_n} f du\right )\\
&\qquad \qquad \qquad -\int_{V_n} e^{\psi_n}\int_0^t \widehat {T}_s^{\psi_n} f ds\cdot N\left ( e^{-\psi_n} \int_0^t \widehat{T}_u^{\psi_n} f du \right)d\mu\\
&\leq -\E^0 \left( \vt, \vt\right )+c_n(f) \left\| \vt \right\|_{L^2(V_n,\mu)}^2.
\end{align*}
Thus, we get by (\ref{ineq2})
\begin{align}\label{ineq3}
\E^0\left( \vt, \vt \right)&\leq c_n(f) \left\| \vt \right\|_{L^2(V_n,\mu)}^2 + \left( \vt,f \right)-\left(\vt,\widehat T_t^{\psi_n}f\right) \nonumber \\
&\leq c_n(f) \left\| \vt \right\|_{L^2(V_n,\mu)}^2+ \left\| \vt \right\|_{L^2(V_n,\mu)}  \|f\|_{L^2(E,\mu)} + \left\| \vt \right\|_{L^2(V_n,\mu)}\left\| \widehat{T}_t^{\psi_n}f \right\|_{L^2(V_n,\mu)}\nonumber \\
&\leq \frac{t}{2}\left ( e^{2c_n(f) t}-1 \right) \|f \|_{L^2(E,\mu)}^2 + t e^{c_n(f) t} \|f\|_{L^2(E,\mu)}^2+t e^{2c_n(f) t} \|f\|_{L^2(E,\mu)}^2\nonumber \\
&\leq 3te^{2c_n(f) t} \|f \|_{L^2(E,\mu)}^2.
\end{align}
Consequently, using the estimates (\ref{ineq2}) and (\ref{ineq3}) in (\ref{ineq1}), we get
\begin{eqnarray}\label{ineq4}
&&\left |\int_0^t \int \frac{d}{ds}\widehat{T}_s^n f \cdot \chi_n d\mu ds\right |  \\ \nonumber
&\leq& \frac{e^{\phi(k_f)-\phi(2n)+c_n(f)t} }{2n} \|f\|_{L^2(E,\mu)}\left ( (\sqrt{3t a_n }+b_n t)\mu(E_{4n}\setminus E_{2n})^{1/2} +t \| N(\rho)\|_{L^2(E_{4n}\setminus E_{2n},\mu)}  \right).
\end{eqnarray}
Let
\begin{equation}\label{ahn}
\widehat{A}_n(\phi):= (\sqrt{a_n}+b_n)\mu(E_{4n}\setminus E_{2n})^{1/2} + \| N(\rho)\|_{L^2(E_{4n}\setminus E_{2n},\mu)}
\end{equation}
where $a_n$ and $b_n$ are defined as in (\ref{an}), (\ref{bn}) respectively. Note that $\widehat{A}_n(\phi)$ depends on the choice of $\phi$ but does not depend on $f$. Lemma \ref{lem4} now leads to the following theorem.
\centerline{}
\begin{theo}\label{thm6}$ $
\begin{itemize}
\item[(i)] Let $f$ be as in (A) and suppose that there exists a continuously differentiable function $\phi: \R^+ \to \R^+$ with $\phi(0)=0$ and $\phi(r)\nearrow +\infty$ as $r \nearrow +\infty$, such that for some constant $T>0$
\begin{equation}\label{lim6}
\limsup_{n\to \infty}\frac{e^{-\phi(2n)+c_n(f) T}}{n}\widehat{A}_n(\phi)=0
\end{equation}
where $\widehat{A}_n(\phi)$ is defined as in (\ref{ahn}). Then
$$
\int_E \widehat{T}_t f d\mu=\int_E f d\mu.
$$
\item[(ii)] Assume that (\ref{lim6}) holds for at least one triple $(f,\phi,T)$ as in (i). Then (\ref{lim6}) holds for the triple $(g,\phi,T)$, for any $g\in D_0$ (see (\ref{D0}) for the definition of $D_0$). In particular, if additionally $D_0$ is dense in $L^1(E,\mu)$, then $\tt$ is conservative.
\end{itemize}
\end{theo}
\begin{proof}
(i) is a direct consequence of (\ref{efl}), (\ref{ineq4}) and (\ref{lim6}). We now prove (ii). Let $(f,\phi,T)$ be as in (i) and $g\in D_0$. It suffices to show that
$$
\limsup_{n\to \infty}\frac{e^{-\phi(2n)+c_n(g) T}}{n}\widehat{A}_n(\phi)=0
$$
where
$$
c_n(g)=\esssup_{E_{4n}\setminus E_{k_g}} \left |(\phi'(\rho))^2 \Gamma(\rho,\rho)+\phi'(\rho)N(\rho)\right |.
$$
If $k_g\geq k_f$, then $E_{k_f}\subset E_{k_g}$ and so $c_n(g)\leq c_n(f)$. Thus (\ref{lim6}) for $(f,\phi,T)$ implies (\ref{lim6}) for $(g,\phi,T)$. If $k_g< k_f$, then
$$
c_n(g)\leq c_n(f) +\esssup_{E_{k_f}\setminus E_{k_g}} \left |(\phi'(\rho))^2 \Gamma(\rho,\rho)+\phi'(\rho)N(\rho)\right |\leq c_n(f)+L
$$
for some constant $L\geq 0$, since $\esssup_{E_{k_f}\setminus E_{k_g}} \left |(\phi'(\rho))^2 \Gamma(\rho,\rho)+\phi'(\rho)N(\rho)\right |$ is finite by (H3) and (\ref{kf}). Thus (\ref{lim6}) holding for the triple $(f,\phi,T)$ again implies (\ref{lim6}) for the triple $(g,\phi,T)$. If additionally $D_0$ is dense, then $\tt$ is conservative by Lemma \ref{lem4}.
\end{proof}
\centerline{}
We formulate the condition of Theorem \ref{thm6}(ii) as
\begin{itemize}
	\item[(H4)] $D_0$ is dense in $L^1(E,\mu)$.
	\end{itemize}
It is clear that (H4) implies (A). Now, we use Theorem \ref{thm6} to develop the following explicit sufficient conditions for conservativeness of $\tt$.
\centerline{}
\begin{cor}\label{g1} Assume that (H1)-(H4) hold.
\begin{itemize}
\item[(i)] Suppose there are constants $M,C>0$, $0<\alpha<1$ and $0\leq \beta<2$, such that
\begin{equation}\label{g11}
 \left |  \Gamma(\rho,\rho) + \frac{(\rho+1)N(\rho)}{C(2-\beta)(\log(\rho+1))^{1-\beta}}  \right |\leq M(\rho+1)^2 (\log(\rho+1))^\beta,
\end{equation}
$\mu$-a.e. outside some arbitrary compact subset $K$ of $E$ with $K\supset K_0$ and
$$
\widehat{A}_n(\phi) \leq n \exp(\alpha C \left (\log(n+1)\right )^{2-\beta}),
$$
for $n\gg 1$, where $ \phi(r)=C(\log(r+1))^{2-\beta}$. Then $\tt$ is conservative.
\item[(ii)]
Suppose there are constants $M,C>0$ and $0<\alpha<1$, such that
$$
 \left |  \Gamma(\rho,\rho) + \frac{1}{C}{(\rho+1)(\log(\rho+1))N(\rho)}  \right |\leq M(\rho+1)^2 (\log(\rho+1))^2,
$$
$\mu$-a.e. outside some arbitrary compact subset $K$ of $E$ with $K\supset K_0$ and
$$
\widehat{A}_n(\phi) \leq n\log(n+1)^{C\alpha},
$$
for $n\gg 1$, where $ \phi(r)=C\log(\log(r+1)+1)$. Then $\tt$ is conservative.
\item[(iii)] Suppose that there are constants $M,C>0$ and $0<\alpha<2$ such that
$$
\left | \Gamma(\rho,\rho)+ \frac{ N( \rho )}{C\rho} \right | \leq M
$$
$\mu$-a.e. outside some arbitrary compact subset $K$ of $E$ with $K\supset K_0$ and
$$
\widehat{A}_n(\phi)\leq n\exp(\alpha C n^2)
$$
for $n\gg 1$, where $\phi(r)=\frac{Cr^2}{2}$. Then $\tt$ is conservative.
\end{itemize}
\end{cor}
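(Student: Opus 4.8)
The plan is to obtain each of the three items as an application of Theorem~\ref{thm6}(ii) with the particular function $\phi$ prescribed in the statement. Since (H1)-(H4) are assumed, in particular (H4) holds, hence so does (A); fix once and for all a function $f$ as in (A). By Theorem~\ref{thm6}(ii) together with (H4) it then suffices, in each case, to produce a single constant $T>0$ for which the limit relation (\ref{lim6}) holds for the triple $(f,\phi,T)$. The common mechanism is the following: substitute the chosen $\phi$ into the definition (\ref{cn}) of $c_n(f)$, factor out $(\phi'(\rho))^2$, use the structural hypothesis on $\Gamma(\rho,\rho)$ and $N(\rho)$ --- which is assumed $\mu$-a.e.\ outside $K$ and hence holds on the shell $E_{4n}\setminus E_{k_f}$ over which the essential supremum in (\ref{cn}) is taken, because $K\subset E_{k_f}$ --- to estimate $c_n(f)$ from above, and finally feed this together with the assumed growth bound on $\widehat A_n(\phi)$ into (\ref{lim6}) and let $n\to\infty$. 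In each case one also checks the elementary requirements $\phi\in C^1(\R^+)$, $\phi(0)=0$ and $\phi(r)\nearrow+\infty$; note that in (i), for $1<\beta<2$, the displayed $\phi$ fails to be differentiable at $0$, but this is immaterial since $\psi_n\equiv 0$ on $E_{k_f}$ and (\ref{cn}) only involves $\rho\ge k_f\ge 1$, so $\phi$ may be modified on a neighbourhood of $0$ without affecting anything.

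For (i), with $\phi(r)=C(\log(r+1))^{2-\beta}$ one has $\phi'(r)=C(2-\beta)(\log(r+1))^{1-\beta}/(r+1)$, so
\[
(\phi'(\rho))^2\Gamma(\rho,\rho)+\phi'(\rho)N(\rho)=\frac{C^2(2-\beta)^2(\log(\rho+1))^{2-2\beta}}{(\rho+1)^2}\left[\Gamma(\rho,\rho)+\frac{(\rho+1)N(\rho)}{C(2-\beta)(\log(\rho+1))^{1-\beta}}\right],
\]
and by (\ref{g11}) the bracket is in absolute value $\le M(\rho+1)^2(\log(\rho+1))^\beta$; hence $c_n(f)\le MC^2(2-\beta)^2(\log(4n+1))^{2-\beta}$, since $\rho<4n$ throughout $E_{4n}$. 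As $\phi(2n)=C(\log(2n+1))^{2-\beta}$ and $\widehat A_n(\phi)\le n\exp(\alpha C(\log(n+1))^{2-\beta})$, the quantity in (\ref{lim6}) is, for $n\gg 1$, bounded by $\exp\!\big(C\ell_n(-1+MC(2-\beta)^2T+\alpha+o(1))\big)$, where $\ell_n:=(\log(n+1))^{2-\beta}\to\infty$ and we used that $\log(cn+1)\sim\log n$ for every fixed $c>0$. Choosing $T$ so small that $MC(2-\beta)^2T<1-\alpha$ --- possible precisely because $\alpha<1$ --- makes the exponent tend to $-\infty$, so (\ref{lim6}) holds.

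For (ii), with $\phi(r)=C\log(\log(r+1)+1)$ one has $\phi'(r)=C/\big((r+1)(\log(r+1)+1)\big)$, and the hypothesis has been tailored so that $(\phi'(\rho))^{-1}$ essentially equals the coefficient $(\rho+1)\log(\rho+1)/C$ of $N(\rho)$; factoring out $(\phi'(\rho))^2$ one reads off $c_n(f)\le MC^2$ (up to the innocuous discrepancy between $\log(\rho+1)$ and $\log(\rho+1)+1$, of lower order on $E_{k_f}^c$). Since $e^{-\phi(2n)}=(\log(2n+1)+1)^{-C}$ and $\widehat A_n(\phi)\le n(\log(n+1))^{C\alpha}$, the quantity in (\ref{lim6}) is $\le e^{MC^2T}(\log(n+1))^{C\alpha}(\log(2n+1)+1)^{-C}$, which tends to $0$ for any fixed $T>0$ because $C\alpha<C$ and $\log(n+1)\sim\log(2n+1)+1$. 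For (iii), with $\phi(r)=Cr^2/2$ one has $\phi'(r)=Cr$ and $(\phi'(\rho))^2\Gamma(\rho,\rho)+\phi'(\rho)N(\rho)=C^2\rho^2\big(\Gamma(\rho,\rho)+N(\rho)/(C\rho)\big)$, so the hypothesis gives $c_n(f)\le 16C^2Mn^2$; with $\phi(2n)=2Cn^2$ and $\widehat A_n(\phi)\le n\exp(\alpha Cn^2)$, the quantity in (\ref{lim6}) is $\le\exp\!\big((-2C+16C^2MT+\alpha C)n^2\big)$, which tends to $0$ once $T$ is taken with $16CMT<2-\alpha$ --- possible because $\alpha<2$. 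In all three cases, (\ref{lim6}) holding for the triple $(f,\phi,T)$, Theorem~\ref{thm6}(ii) and (H4) yield the conservativeness of $\tt$.

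I expect the main obstacle to be the mismatch of scales in items (i) and (iii): $c_n(f)$ is an essential supremum over a shell reaching out to $\rho\approx 4n$, so it carries $\phi'$ (and related quantities) at scale $4n$, while the decisive exponential gain $e^{-\phi(2n)}$ only uses scale $2n$; the resolution is to use the freedom, built into Theorem~\ref{thm6}(i), to take the time $T$ small enough that $c_n(f)T$ is strictly dominated by $\phi(2n)$ minus the ``$\alpha$-budget'' coming from the growth bound on $\widehat A_n(\phi)$ --- this is exactly why $\alpha<1$ (resp.\ $\alpha<2$) is needed. In (ii) this tension disappears because $c_n(f)$ is bounded, and any $T$ works. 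The remaining ingredients --- the three algebraic factorizations of $(\phi'(\rho))^2\Gamma(\rho,\rho)+\phi'(\rho)N(\rho)$, the elementary asymptotics $\log(cn+1)\sim\log n$, and the verification that $\phi$ may be taken $C^1$ --- are routine.
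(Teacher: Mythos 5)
Your proposal is correct and follows essentially the same route as the paper's own proof: in each case one inserts the prescribed $\phi$ into (\ref{cn}), factors out $(\phi'(\rho))^2$, uses the structural bound to get $c_n(g)\leq M'(\log(4n+1))^{2-\beta}$ (resp.\ a constant, resp.\ $16MC^2n^2$), and then chooses $T>0$ small enough --- exactly the paper's $T=\frac{C(1-\alpha)}{2M'}$ and $T=\frac{2-\alpha}{32MC}$ --- so that the exponent in (\ref{lim6}) tends to $-\infty$, concluding via Theorem~\ref{thm6}(ii) and (H4). Your additional remarks (the $C^1$-modification of $\phi$ near $0$ for $1<\beta<2$, and the $\log(\rho+1)$ versus $\log(\rho+1)+1$ discrepancy in (ii), which the paper silently absorbs into ``proceed as in (i)'') only make the argument more careful than the original.
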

\begin{proof} (i) Assume there are constants $M,C>0$, $0<\alpha<1$ and $0\leq \beta<2$ such that (\ref{g11}) holds. Let
$$
\phi(r):=C(\log(r+1))^{2-\beta}.
$$
Since $0\leq \beta <2$, $\phi(r)$ is increasing in $r>0$ and
$$
\phi'(r)=\frac{C(2-\beta)}{(r+1)}(\log(r+1))^{1-\beta}.
$$
By (H4), we can choose  $g\in D_0$ with supp($g)\neq \emptyset$. By definition of $k_g$, we know $K_0\subset K\subset E_{k_g}$. Hence by (\ref{g11}), we obtain that
$$
c_n(g)\leq\esssup_{E_{4n}\setminus K} \left |(\phi'(\rho))^2 \right |\cdot \left |\Gamma(\rho,\rho)+\frac{N(\rho)}{\phi'(\rho)}\right |\leq M'(\log(4n+1))^{2-\beta}
$$
where $M'>0$ is some constant depending only on $M, C$ and $\beta$. Subsequently, for $n\geq k_g$
\begin{eqnarray*}
&&\frac{\widehat {A}_n(\phi)}{n}\exp(-\phi(2n)+c_n(g) T) \\
&\leq & \exp\left (\alpha C(\log(n+1))^{2-\beta}-C(\log(2n+1))^{2-\beta} +M'T(\log(4n+1))^{2-\beta}\right ).
\end{eqnarray*}
Let $T:=\frac{C(1-\alpha)}{2M'}>0$. Then the right hand side of the above inequality tends to 0 as $n\to \infty$ and so (\ref{lim6}) of Theorem \ref{thm6}(i) holds for the triple $(g,\phi,T)$. Using (H4), Theorem \ref{thm6}(ii) applies, i.e. $\tt$ is conservative.\\
(ii) Let $\beta=2$. Putting
$$
\phi(r):=C\log( \log(r+1)+1),
$$
we can proceed as in (i) to show that $\tt$ is conservative.\\
(iii) Let $g\in D_0$ with supp($g)\neq \emptyset$. For $n\geq k_g$,
$$
c_n(g)\leq \esssup_{E_{4n}\setminus K_0} |(\phi'(\rho)^2| \cdot \left | \Gamma(\rho,\rho)+ \frac{N( \rho )}{\phi'(\rho)}\right |\leq \esssup_{E_{4n}\setminus K_0}MC^2\rho^2=16MC^2 n^2,
$$
and so
$$
\frac{e^{-\phi(2n)+c_n(g)T}}{n}\widehat{A}_n(\phi)\leq 
\exp(\alpha Cn^2-2Cn^2+16MC^2Tn^2).
$$
Let $T:=\frac{2-\alpha}{32MC}>0$, then
$$
\lim_{n\to \infty}\frac{e^{-\phi(2n)+c_n(g)T}}{n}\widehat{A}_n(\phi)=0.
$$
Applying Theorem \ref{thm6}(ii), we obtain that $\tt$ is conservative.
\end{proof}
\section{Applications to symmetric and non-symmetric Dirichlet forms}\label{3}
In the fist subsection, we apply Theorem \ref{thm6} to symmetric Dirichlet forms. The results turn out to be comparable with the results of \cite[Section 3.1]{OU15} (cf. Example \ref{symdf} and Remark \ref{sturm} below).
\subsection{Symmetric Dirichlet forms}\label{exam8}
Let $(\E,\F)$ be a symmetric strongly local regular Dirichlet form on $L^2(E,\mu)$ expressed as
\begin{equation}\label{form}
\E(f,g)=\int_E \Gamma(f,g)(x)\mu(dx), \text{ for } f,g\in \F.
\end{equation}
Let us fix an arbitrary $x_0\in E$ and denote $d(x,x_0)$ by $d(x)$ for simplicity. Assume
\begin{equation}\label{dloc}
d\in \F_{loc}
\end{equation}
and that
\begin{equation}\label{relcpt}
E_r:=\{ x\in E: d(x)<r\}\text{ are relatively compact open sets in }E\text{ for any }r>0.
\end{equation}
Assume further that there exists a compact subset $K_0$ of $E$ such that
\begin{equation}\label{sym1}
\Gamma(d,d)\in L^\infty_{loc}(K_0^c,\mu).
\end{equation}
 As we have seen in Remark \ref{rem3}, (H1) and (H2) hold with $p=2$ and  $N\equiv0$. Furthermore, putting $\rho(x)=d(x)$, (H3) also holds by (\ref{dloc}), (\ref{relcpt}) and (\ref{sym1}). Since the semigroups $(T_t^n)_{t>0}$, $n\geq 1$ of the part forms $(\E^n,\F^n)$ on $L^2(V_n,\mu)$ are analytic so that in particular $T_t^nf \in D(L^n)$ for any $f\in L^2(E,\mu)$ and $t>0$, (H4) also holds (obviously $D_0=L^\infty(E,\mu)\cap L^2(E,\mu)_0$ is dense in $L^1(E,\mu)$). Thus we can use Theorem \ref{thm6} to determine conservativeness of the symmetric Dirichlet form $(\E,\F)$. More precisely, we have:
\begin{prop}\label{symexam}$ $
\begin{itemize}
\item[(i)] Assume there are constants $M,N>0$ and $0\leq \beta\leq2$, such that
\begin{equation}\label{first}
 \Gamma(d,d)\leq M(d+1)^2 \left (\log(d+1)\right )^\beta,
\end{equation}
$\mu$-a.e. outside some arbitrary compact subset $K$ of $E$ with $K\supset K_0$ and
$$
\mu(E_{4n}\setminus E_{2n}) \leq \exp(2N \left (\log(n+1)\right )^{2-\beta}), \ \text{if }0\leq \beta <2,
$$
or
$$
\mu(E_{4n}\setminus E_{2n}) \leq \log(n+1)^{2N}, \ \text{if } \beta=2
$$
for $n\gg 1$. Then $\tt$ is conservative.
\item[(ii)] Assume there are constants $M,N>0$ such that
$$
\Gamma(d,d)\leq M
$$
$\mu$-a.e. outside some arbitrary compact subset $K$ of $E$ with $K\supset K_0$ and
$$
\mu(E_{4n}\setminus E_{2n})\leq \exp(2Nn^2)
$$
for $n \gg 1$. Then $\tt$ is conservative.
\end{itemize}
\end{prop}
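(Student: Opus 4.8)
The plan is to obtain Proposition \ref{symexam} as a direct specialization of Corollary \ref{g1} to the symmetric case, where $N\equiv 0$, $\rho=d$, and (H1)--(H4) have already been checked in the paragraph preceding the statement (with $p=2$ and $D_0=L^\infty(E,\mu)\cap L^2(E,\mu)_0$, which is dense in $L^1(E,\mu)$). The key simplification is that for $N\equiv 0$ the quantity in (\ref{ahn}) reduces to $\widehat A_n(\phi)=(\sqrt{a_n}+b_n)\,\mu(E_{4n}\setminus E_{2n})^{1/2}$, and the $\Gamma$-growth hypotheses in parts (i)--(iii) of Corollary \ref{g1} lose their dependence on the constant $C$; hence $C$ stays free and we shall pick it large relative to the volume-growth constant $N$. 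Throughout we use that, $K$ being compact and $\bigcup_{r}E_r=E$, we have $E_{2n}\supset K$ for $n\gg 1$, so on the annulus $E_{4n}\setminus E_{2n}$ the assumed bound on $\Gamma(d,d)$ is in force, and that $d<4n$ there.

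For part (i) with $0\le\beta<2$ I take $\phi(r)=C(\log(r+1))^{2-\beta}$ as in Corollary \ref{g1}(i). Inserting $\Gamma(d,d)\le M(d+1)^2(\log(d+1))^\beta$ and $\phi'(r)=\frac{C(2-\beta)}{r+1}(\log(r+1))^{1-\beta}$ into the definitions (\ref{an}), (\ref{bn}) and using $d<4n$ gives $a_n\le M(4n+1)^2(\log(4n+1))^\beta$ and $b_n\le MC(2-\beta)(4n+1)\log(4n+1)$, so that $\sqrt{a_n}+b_n$ is bounded by a constant times $n\log(n+1)$ for $n\gg 1$. With $\mu(E_{4n}\setminus E_{2n})^{1/2}\le\exp(N(\log(n+1))^{2-\beta})$ this yields $\widehat A_n(\phi)\le n\exp(\alpha C(\log(n+1))^{2-\beta})$ for $n\gg 1$ as soon as $\alpha C>N$ (e.g. $\alpha=\tfrac12$, $C=4N$), since $2-\beta>0$ makes $\exp((\alpha C-N)(\log(n+1))^{2-\beta})$ eventually dominate the leftover polylog factor. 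Corollary \ref{g1}(i) then gives conservativeness. The case $\beta=2$ is the same with $\phi(r)=C\log(\log(r+1)+1)$: here $\phi'(r)=\frac{C}{(r+1)(\log(r+1)+1)}$ yields $\sqrt{a_n}+b_n=O(n\log(n+1))$, hence $\widehat A_n(\phi)\le C'n(\log(n+1))^{N+1}$, which is $\le n\log(n+1)^{C\alpha}$ for $n\gg 1$ once $C\alpha>N+1$, so Corollary \ref{g1}(ii) applies.

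For part (ii) I take $\phi(r)=\tfrac{C r^2}{2}$ as in Corollary \ref{g1}(iii); now $\Gamma(d,d)\le M$ and $\phi'(r)=Cr$ give $a_n\le M$ and $b_n\le 4CMn$, so $\sqrt{a_n}+b_n=O(n)$ and $\widehat A_n(\phi)\le C''n\exp(Nn^2)$. This is $\le n\exp(\alpha Cn^2)$ for $n\gg 1$ whenever $\alpha C>N$ with $\alpha\in(0,2)$ (e.g. $\alpha=1$, $C=2N$), and Corollary \ref{g1}(iii) then yields conservativeness.

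There is no serious obstacle --- the statement is a specialization --- so the only points requiring care are that the essential suprema defining $a_n,b_n$ over $E_{4n}\setminus E_{2n}$ may indeed be estimated through the assumed growth bound (which needs $E_{2n}$ to eventually contain the exceptional compact $K\supset K_0$), and that one must genuinely exploit the freedom in $C$ to make it large compared with $N$ so that the volume-growth hypothesis implies the hypothesis $\widehat A_n(\phi)\le n\exp(\cdots)$ of Corollary \ref{g1} with an admissible $\alpha$.
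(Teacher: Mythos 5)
Your proposal is correct and follows essentially the same route as the paper: the paper likewise takes $\phi(r)=C(\log(r+1))^{2-\beta}$ (resp. $C\log(\log(r+1)+1)$, resp. a multiple of $r^2$), bounds $a_n$ and $b_n$ exactly as you do, and chooses $C$ proportional to $N$ (namely $C=3N$) so that the volume-growth hypothesis forces the limit in Theorem \ref{thm6} to vanish. The only cosmetic difference is that the paper verifies condition (\ref{lim6}) of Theorem \ref{thm6} directly (picking an explicit $T$), whereas you package the same estimates as a verification of the hypotheses of Corollary \ref{g1} with $N\equiv 0$; both amount to the identical computation.
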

\begin{proof}
(i) Let $0\leq \beta <2$ and define for $r>0$,
$$
\phi(r):=C \left (\log(r+1)\right )^{2-\beta}
$$
where $C>0$ will be chosen later. Then, $\phi(r)$ is increasing in $r>0$ and
$$
\phi'(r)=\frac{C(2-\beta)}{(r+1)}\left (\log(r+1)\right )^{1-\beta}.
$$
Choose  $g\in D_0$ with supp($g)\neq \emptyset$. For $n\geq k_g$, we have by (\ref{first})
$$
a_n=\esssup_{E_{4n}\setminus E_{2n}} \Gamma(d,d)\leq M(4n+1)^2(\log(4n+1))^\beta,
$$
$$
b_n=\esssup_{E_{4n}\setminus E_{2n}} \phi'(d)\Gamma(d,d)\leq MC(2-\beta)(4n+1)\log(4n+1)
$$
and
$$
c_n(g)\leq\esssup_{E_{4n}\setminus K} (\phi'(d))^2 \Gamma(d,d)
\leq MC^2 (2-\beta)^2 \left(\log(4n+1) \right)^{2-\beta}.
$$
Subsequently,
\begin{eqnarray*}
&&e^{-\phi(2n)+c_n(g) T}\mu(E_{4n}\setminus E_{2n})^{1/2}\\
&\leq& \exp\left (-C(\log(2n+1))^{2-\beta}+N(\log(n+1))^{2-\beta}+TMC^2(2-\beta)^2(\log(4n+1))^{2-\beta}\right ).
\end{eqnarray*}
Let $C:=3N$ and $T:=\frac{1}{9MN(2-\beta)^2}>0$, then we obtain
\begin{equation}\label{slim}
\lim_{n\to \infty}\frac{e^{-\phi(2n)+c_n(g) T}}{n} \widehat{A}_n(\phi)=\lim_{n\to \infty}\frac{e^{-\phi(2n)+c_n(g) T}}{n}\mu(E_{4n}\setminus E_{2n})^{1/2}(\sqrt{a_n}+b_n)=0.
\end{equation}
Consequently, by the same arguments in Corollary \ref{g1}, $\tt$ is conservative when $0\leq \beta<2$.\\
Let $\beta=2$. Define
$$
\phi(r):=3N\log(\log(r+1)+1).
$$
Then by similar calculations, we can choose $T>0$ such that (\ref{slim}) holds.\\
(ii) Choosing $\phi(r):=3Nr^2$ the proof is similar to the one of (i).
\end{proof}

\begin{exam}\label{symdf}
(cf. \cite[Section 3.1]{OU15}) Let $(\E, C_0^\infty(\R^d))$ be a symmetric bilinear form in $L^2(\R^d,dx)$ defined by
$$
\E(f,g):=\int_{\R^d} \langle A \nabla f ,\nabla g \rangle dx,
$$
where $A=(a_{ij})=(a_{ji})\in L^1_{loc}(\R^d,dx)\cap L^\infty_{loc}(K_0^c,dx)$, $1\leq i,j\leq d$ for some compact subset $K_0$ in $\R^d$. Assume that for any compact set $K$, there exists a constant $\nu_K>0$ such that
$$
\nu_K |\xi|^2 \leq \langle A(x) \xi,\xi\rangle
$$
for all $\xi \in \R^d$, $\mu$-a.e. $x\in K$. Here $\langle\ ,\ \rangle$ denotes the Euclidean inner product on $\R^d$ with corresponding norm $|\cdot |$ and $C_0^\infty(\R^d)$ is the set of infinitely often differentiable functions with compact support in $\R^d$. Then $(\E,C_0^\infty(\R^d))$ is closable and its closure ($\E,\F)$ satisfies (H1)-(H4) with $p=2$, $N\equiv 0$ and $\rho(x)=|x|$. Indeed, for each relatively compact open subset $V$ of $\R^d$, there exists a function $\chi_V\in C_0^\infty(\R^d)$ such that $\chi_V\equiv 1$ on $V$. Then $\rho \chi_V \in \F$ and $\rho \chi_V=\rho$ on $V$, hence $\rho\in \F_{loc}$. Consequently, by Proposition \ref{symexam}(i), $\tt$ is conservative if there exists a constant $M>0$ such that
$$
\frac{\langle A(x)x,x\rangle}{|x|^2}\leq M(|x|+1)^2\log(|x|+1)
$$
$dx$-a.e. outside some compact subset $K$ of $\R^d$ containing $K_0$.
\end{exam}

\begin{rem}\label{sturm}
(cf. \cite[Section 3.1]{OU15}) By Proposition \ref{symexam}(ii), we recover the result of \cite[Remarks p.185 (3.7)]{Stu1}. More precisely, \cite[Theorem 4]{Stu1} was devoted to determine the conservativeness for a symmetric strongly local regular Dirichlet form expressed as in (\ref{form}) in case that the topology induced by the intrinsic metric is equivalent to the original topology on $E$ and in case that the intrinsic balls are all relatively compact open in $E$ (cf. \cite[Assumption (A)]{Stu1}). Then by \cite[Lemma 1]{Stu1}, $\rho(\cdot):= d^{int}(\cdot , x_0)\in \F_{loc}\cap C(E)$ for any $x_0\in E$ where $d^{int}$ is the intrinsic metric and $\rho$ satisfies
$$
\Gamma (\rho,\rho)\leq 1.
$$
Applying these assumptions to our situation implies $a_n\leq 1$ for any $n\geq 1$. Hence (\ref{form}), (\ref{dloc}), (\ref{relcpt}) and (\ref{sym1}) are satisfied and thus by Proposition \ref{symexam}(ii), $\tt$ is conservative if there exists a constant $N>0$ such that $\mu(E_{4n}\setminus E_{2n})\leq \exp(2N n^2)$ for $n\gg 1$.
\end{rem}
\subsection{Sectorial perturbations of symmetric Dirichlet forms}\label{exam10}
In this subsection, we apply Theorem \ref{thm6} to non-symmetric Dirichlet forms which are divergence free perturbations of symmetric Dirichlet forms on $\R^d$.\\ \\
Let $E=\R^d$ and $d\mu=\varphi dx$ where $\varphi \in L^1_{loc}(\R^d,dx)$, $\varphi>0$ $dx$-a.e. Consider $A=(a_{ij})=(a_{ji}) \in L_{loc}^1(\R^d,\mu)\cap L^\infty_{loc}(K_0^c,\mu)$, ${1\leq i,j \leq d}$ for some compact subset $K_0$ in $\R^d$ and suppose for any compact set $K \subset \R^d$, there exists $\nu_K >0$ such that
\begin{equation}\label{lb}
\nu_K |\xi|^2 \leq \langle A(x) \xi,\xi\rangle
\end{equation}
for all $\xi \in \R^d$, $\mu$-a.e. $x\in K$. We assume that the symmetric bilinear form
$$
\E^0(f,g):=\int_{\R^d} \langle A(x) \nabla f(x),\nabla g(x)\rangle \mu(dx),\ f,g \in C_0^\infty(\R^d)
$$
is closable on $L^2(\R^d,\mu)$. Then its closure $(\E^0,D(\E^0))$ is a symmetric strongly local regular Dirichlet form. We further assume that $B=(B_1,...,B_d)\in L^2_{loc}(\R^d,\R^d,\mu)$  satisfies $|B|\in L^\infty_{loc}(K_0^c,\mu)$ and
\begin{equation}\label{div0}
\int_{\R^d} \langle B(x),\nabla f(x) \rangle \mu(dx) =0
\end{equation}
for any $f\in C_0^\infty(\R^d)$ and there exists a constant $C>0$ which is independent of $f$ and $g$ such that
\begin{equation}\label{wsc}
\left | \int_{\R^d} \langle B,\nabla f \rangle g d\mu \right | \leq C \E_1^0(f,f)^{1/2}\E_1^0(g,g)^{1/2},
\end{equation}
for any $f,g \in C_0^\infty(\R^d)$. Consider the non-symmetric bilinear form
$$
\E(f,g):=\int \langle A(x) \nabla f(x),\nabla g(x)\rangle \mu(dx)-\int \langle B(x), \nabla f(x) \rangle g(x) \mu(dx),\ f,g \in C_0^\infty(\R^d).
$$
Then $(\E,C_0^\infty(\R^d))$ is closable on $L^2(\R^d,\mu)$ and by (\ref{div0}) and (\ref{wsc}), the closure $(\E,\F)$ is a non-symmetric Dirichlet form in the sense of \cite[I. Definition 4.5]{MR}. By (\ref{lb}), (\ref{div0}) and (\ref{wsc}), we obtain
$$
\int_{\R^d} \langle B, \nabla v\rangle d\mu=0, \ \text{for any }v\in \F_{b}.
$$
Let $V_n=\{ z: |z|<4n\}$. As in Remark \ref{rem3}, we may define the part Dirichlet forms $(\E^n,\F^n)$ corresponding to the increasing sequence of relatively compact open sets  $(V_n)_{n\geq 1}$ where $\F^n=\{ u\in \F : \widetilde{u}=0$ q.e. on $ V_n^c\}$ (see \cite[Section 3.5]{O13}). Denote the coform of $(\E^n,\F^n)$ by $(\widehat\E^n, \F^n)$ and the associated semigroups of $(\widehat{\E}^n,\F^n)$ by $(\widehat{T}_t^n)_{t>0}$ and the associated linear operators by $(\widehat{L}^n,D(\widehat{L}^n))$ on $L^2(V_n,\mu)$.  Then the coform $(\widehat\E^n, \F^n)$ is also a non-symmetric Dirichlet form in $L^2(V_n,\mu)$ and
$$
\widehat{T}_t f=\lim_{n \to \infty}\widehat{T}_t^n f \ \mu\text{-a.e.}
$$
for any $f\in L^2(\R^d,\mu)$ where $\widehat{T}_t^n f:= \widehat{T}_t^n(f\cdot 1_{V_n})$. In particular, if $f$ is non-negative, then $\widehat{T}_t^n f \nearrow \widehat{T}_t f$ $\mu$-a.e. as $n\to \infty$. $(\E,\F)$ satisfies (H1) with $p=2$. Furthermore, for $v\in D(\widehat{L}^n)_b$
$$
(-\widehat{L}^n v,u)=\E(u,v)=\E^0(u,v)+\int_{\R^d} \langle B,\nabla v\rangle u d\mu \quad\text{for any }u\in \F_b.
$$
Putting $D(N)=\F_{loc,b}$ and $Nv=\langle B,\nabla v \rangle$  imply that (\ref{euv}) and (H2) hold. Choose $\rho(x):=|x|$. Then in the same way as in Example \ref{symdf}, we find that $\rho \in \F_{loc}$ and by the assumptions on $A$ and $B$, we obtain
$$
\langle A \nabla \rho,\nabla \rho\rangle, \ \langle B,\nabla \rho \rangle \in L^\infty_{loc}(K_0^c,\mu),
$$
hence (H3) holds. By \cite[I. Corollary 2.21]{MR}, ($\widehat{T}_t^n)_{t>0}$ is analytic on $L^2(V_n,\mu)$, hence (H4) holds (i.e. $D_0=L^\infty(\R^d,\mu)\cap L^2(\R^d,\mu)_0$).\\
\centerline{}
\subsubsection{Example}\label{ShTrex}
Consider the non-symmetric Dirichlet form introduced in \cite[Section 5]{RST}. There $\varphi$ is a Muckenhoupt $\A_\beta$-weight, $1\leq \beta\leq  2$ with $\varphi=\xi^2$, $\xi\in H_{loc}^{1,2}(\R^d,dx)$, $\varphi>0$ $dx$-a.e. and
$$
\frac{|\nabla \varphi |}{\varphi} \in L^p_{loc}(\R^d,dx)
$$
where $p=(d+\e)\vee 2$ for some $\e>0$, $H^{1,2}(\R^d,dx)$ is the usual Sobolev space of order one in $L^2(\R^d,dx)$ and $H_{loc}^{1,2}(\R^d,dx):=\{f: f\cdot \chi\in H^{1,2}(\R^d,dx)\text{ for any } \chi \in C_0^\infty(\R^d) \}$. Thus the symmetric bilinear form
$$
\E^0(f,g)=\int_{\R^d} \langle \nabla f(x),\nabla g(x)\rangle \mu(dx),\ f,g \in C_0^\infty(\R^d)
$$
is closable on $L^2(\R^d,\mu)$. Moreover in \cite[Section 5]{RST} it is assumed that $ |B| \in L^N_{loc}(\R^d,\mu)\cap L^\infty(K_0^c,\mu)$ for some compact set $K_0$ and some constant $N\geq \beta d +\log_2 A$, where the constant $A$ is the $\A_\beta$ constant of $\varphi$. Then by \cite[Section 5]{RST}, (\ref{wsc}) holds. The corresponding closure $(\E,\F)$ satisfies (H1)-(H4) with $D(N)=\F_{loc,b}$, $Nv=\langle B,\nabla v \rangle$ and $\rho(x)=|x|$ as in Example \ref{symdf} and $D_0=L^\infty(\R^d,\mu)\cap L^2(\R^d,\mu)_0$. In this situation, $\Gamma(\rho,\rho)=1$ and
$$
\left | \frac{\langle B ,\nabla \rho \rangle}{\rho}\right | \leq \|B \|_{L^\infty(K_0^c,\mu)}
$$
$\mu$-a.e. on $K^c$ where $K$ is an arbitrary compact subset of $\R^d$ containing $K_0$ and $\{x\in \R^d: |x|\leq 1\}$. Furthermore, since $\varphi\in \A_\beta$, we get by \cite[Proposition 1.2.7]{Tu} that there exists a constant $N>0$ such that
$$
\mu(E_{4n})\leq N r^{\beta d}.
$$
Thus, for $\phi(r):=\frac{r^2}{2}$ we obtain (cf.  (\ref{ahn})) for $n\gg 1$
$$
\widehat{A}_n(\phi) \leq \left (1+4n 
+\|B \|_{L^\infty(K_0^c,\mu)}\right )N r^{\beta d}.
$$
Consequently, $\tt$ is conservative by Corollary \ref{g1}(iii) and we recover the result of \cite[Lemma 5.4]{RST}.

\subsection{Sectorial perturbations of sectorial Dirichlet forms}\label{exam11}
In this subsection, we show that Theorem \ref{thm6} is also applicable to non-symmetric Dirichlet forms with non-symmetric diffusion matrix. The key observation is that the anti-symmetric part of the diffusion matrix becomes a $\mu$-divergence free vector field after integration by parts.\\ \\
Let $E=\R^d$ and $d\mu=\varphi^2 dx$, $\varphi\in H_{loc}^{1,2}(\R^d,dx)$, $\varphi>0$ $dx$-a.e. Let $H^{1,2}(\R^d,\mu)$ be the closure of $C_0^\infty(\R^d)$ in $L^2(\R^d,\mu)$ with respect to the norm $\left (\int_{\R^d}(|\nabla f|^2+f^2)d\mu\right )^{1/2}$ and
$$
H_{loc}^{1,2}(\R^d,\mu):=\{f: f\cdot \chi\in H^{1,2}(\R^d,\mu)\text{ for any } \chi \in C_0^\infty(\R^d) \}.
$$
Consider $A=(a_{ij})\in L^{1}_{loc}(\R^d,\mu)$, $1\leq i,j\leq d$ with symmetric part $\widetilde{A}=(\widetilde{a}_{ij})$, where $\widetilde{a}_{ij}:=\frac{1}{2}(a_{ij}+a_{ji})\in L^\infty_{loc}(K_0^c,\mu)$ for some compact subset $K_0$ in $\R^d$ and anti-symmetric part $\check{A}=(\check{a}_{ij})$, where $\check{a}_{ij}:=\frac{1}{2}(a_{ij}-a_{ji})\in H_{loc}^{1,2}(\R^d,\mu)\cap L_{loc}^\infty(\R^d,\mu)$. Suppose for any compact set $K \subset \R^d$, there exist $\nu_K >0$ and $L>0$, such that
$$
\max_{1\leq i,j\leq d} \esssup_{K} | \check{a}_{ij}| \leq L \cdot \nu_K\ \text{ and }\ \nu_K |\xi|^2 \leq \langle \widetilde{A}(x) \xi,\xi\rangle
$$
for all $\xi \in \R^d$, $\mu$-a.e. $x\in K$. Assume that $B=(B_1,...,B_d)\in L^2_{loc}(\R^d,\R^d,\mu)$ satisfies\begin{equation}\label{div1}
\int_{\R^d} \langle B, \nabla f\rangle d\mu=0,
\end{equation}
for any $f\in C_0^\infty(\R^d)$. Assume further that there exists a constant $C>0$ such that
$$
\left | \int_{\R^d} \langle B,\nabla f\rangle d\mu \right |\leq C \E_1^{\widetilde{A}}(f,f)^{1/2}\E_1^{\widetilde{A}}(g,g)^{1/2}
$$
for any $f,g\in C_0^\infty(\R^d)$, where $\E^{\widetilde{A}}(f,g):=\int_{\R^d} \langle \widetilde{A}\nabla f, \nabla g \rangle d\mu$. Likewise define $\E^{\check{A}}(f,g)$ and $\E^A(f,g)$ and set
$$
\E^{A,B}(f,g):=\E^A(f,g)-\int_{\R^d} \langle B,\nabla f\rangle g d\mu=\int_{\R^d} \langle A\nabla f,\nabla g\rangle d\mu -\int_{\R^d} \langle B,\nabla f\rangle g d\mu, \quad f,g \in C_0^\infty(\R^d).
$$
Then $(\E^{A,B},C_0^\infty(\R^d))$ is closable on $L^2(\R^d,\mu)$ and its closure $(\E^{A,B},\F)$ is a non-symmetric sectorial regular Dirichlet form.
Let $\tt$ (resp. $\tht$) be the $C_0$-semigroup of contractions on $L^2(\R^d,\mu)$ associated with $(\E^{A,B},\F)$, and $(L,D(L))$ (resp. $(\widehat{L},D(\widehat{L}))$ be the corresponding linear operator (resp. co-operator). For $f,g \in C_0^\infty (\R^d)$, we obtain by integration by parts
\begin{align}\label{betai}
\E^{\check{A}}(f,g)-\int_{\R^d} \langle B,\nabla f \rangle g d\mu &=-\sum_{i,j}^d \int_{\R^d} \left [ \check{a}_{ij}\partial_i \partial_j f + \left( \partial_j \check{a}_{ij}+\check{a}_{ij}\frac{2\partial_j \varphi}{\varphi} +B_i \right)\partial_i f \right ]g \varphi^2 dx \nonumber\\
&= -\sum_{i=1}^d \int_{\R^d}  \underbrace{\left [\sum_{j=1}^d  \left( \partial_j \check{a}_{ij}+\check{a}_{ij}\frac{2\partial_j \varphi}{\varphi} +B_i \right)\right ]}_{=:\beta_i}\partial_i f g \varphi^2 dx
\end{align}
where $\beta=(\beta_1,...,\beta_d)\in L^2_{loc}(\R^d,\R^d,\mu)$ is again a $\mu$-divergence free vector field. Indeed, for $f\in C_0^\infty(\R^d)$, we get by (\ref{div1})
$$
\int_{\R^d} \langle \beta, \nabla f \rangle d\mu=\sum_{i=1}^d\int_{\R^d}  \left [\sum_{j=1}^d  \left( \partial_j \check{a}_{ij}+\check{a}_{ij}\frac{2\partial_j \varphi}{\varphi} \right)\right ]\partial_i f \varphi^2 dx=-\int_{\R^d} \sum_{i,j}^d \check{a}_{ij}\partial_i \partial_jf \varphi^2 dx=0.
$$
Moreover, by (\ref{betai}) and since $\E^{\check{A}}$ satisfies the strong sector condition, there is a constant $C>0$ such that
$$
\left | \int_{\R^d} \langle \beta-B, \nabla f \rangle g d\mu \right |=\left |\E^{\check{A}}(f,g) \right | \leq C \E_1^{\widetilde{A}}(f,f)^{1/2}\E_1^{\widetilde{A}}(g,g)^{1/2}\ \text{for any }f,g\in C_0^\infty(\R^d),
$$
hence $\left | \int_{\R^d} \langle \beta, \nabla f \rangle g d\mu \right | \leq C \E_1^{\widetilde{A}}(f,f)^{1/2}\E_1^{\widetilde{A}}(g,g)^{1/2}$ for some constant $C>0$.
It follows that $B$ and $\beta$ satisfy the same assumptions and that
$$
\E^{A,B}(f,g)=\E^{\widetilde{A}}(f,g)-\int_{\R^d} \langle \beta,\nabla f \rangle g d\mu=:\E^{\widetilde{A},\beta}(f,g)
$$
for any $f,g\in C_0^\infty(\R^d)$. Therefore, the closures of $(\E^{A,B},C_0^\infty(\R^d))$ and $(\E^{\widetilde{A},\beta},C_0^\infty(\R^d))$ are identical and define the same Dirichlet form. We now assume that $|\beta| \in L^\infty_{loc}(K_0^c,\mu)$.\\
Let $V_n=E_{4n}=\{z: |z|<4n\}$. Then $(V_n)_{n\geq 1}$ is a sequence of relatively compact open sets. As in subsection \ref{exam10}, let $(\E^n,\F^n)$ be the part Dirichlet forms on $L^2(V_n,\mu)$ of $(\E^{\widetilde{A},\beta},\F)$ (see \cite[Section 3.5]{O13}). Let $(\widehat\E^n, \F^n)$ be the coform of $(\E^n,\F^n)$, $(\widehat{T}_t^n)_{t>0}$ be the associated semigroups of $(\widehat{\E}^n,\F^n)$ and $(\widehat{L}^n,D(\widehat{L}^n))$ be the associated linear operators on $L^2(V_n,\mu)$. Then
$$
\widehat{T}_t f=\lim_{n \to \infty}\widehat{T}_t^n f \ \mu\text{-a.e.}
$$
for any $f\in L^2(\R^d,\mu)$ where $\widehat{T}_t^n f:= \widehat{T}_t^n(f\cdot 1_{V_n})$. In particular, if $f$ is non-negative, then $\widehat{T}_t^n f \nearrow \widehat{T}_t f$ $\mu$-a.e. as $n\to \infty$. $(\E^{\widetilde{A},\beta},\F)$ satisfies (H1) with $p=2$. Furthermore, for $v\in D(\widehat{L}^n)_b$,
$$
(-\widehat{L}^n v,u)=\E^{\widetilde{A},-\beta}(u,v)=\E^{\widetilde{A}}(u,v)+\int_{\R^d} \langle \beta,\nabla v\rangle u d\mu, \quad\text{for any }u\in \F_b.
$$
Putting $D(N)=\F_{loc,b}$ and $Nv=\langle \beta,\nabla v \rangle$  imply that (\ref{euv}) and (H2) hold with $(\E^0,D(\E^0))=(\E^{\widetilde{A}},\F)$. Let $\rho(x):=|x|$ then $\rho\in \F_{loc}$ as in Example \ref{symdf}. We further obtain by the assumptions on $\widetilde{A}$ and $\beta$, that
$$
\langle \widetilde{A} \nabla \rho,\nabla \rho\rangle,\ \langle \beta,\nabla \rho \rangle \in L^\infty_{loc}(K_0^c,\mu).
$$
Hence (H3) holds. Since $(\widehat{\E}^n,\F^n)$ satisfies the weak sector condition for each $n\geq 1$, $(\widehat{T}_t^n)_{t>0}$ are analytic, i.e. (H4) holds. Consequently, by Corollary \ref{g1}(i) with $\phi(r):=C\log(r+1)$, $\rho(x)=|x|$, if there are constants $M, C>0$, and $0<\alpha<1$ such that
\begin{equation}\label{esti}
\left | \frac{\left \langle \widetilde{A}(x)x,  x \right \rangle}{|x|^2} + \frac{(|x|+1)}{C|x|}\langle \beta(x), x \rangle \right | \leq M(|x|+1)^2 \log(|x|+1),
\end{equation}
$dx$-a.e. outside some compact subset $K$ of $\R^d$ with $K\supset K_0$ and
\begin{equation}\label{ahnp}
\widehat{A}_n(\phi)\leq n (n+1)^{\alpha C}
\end{equation}
for $n\gg 1$, then $\tt$ is conservative.

\subsubsection{Example}\label{3.3.1}
The sufficient criteria (\ref{esti}) and (\ref{ahnp}) for conservativeness extend the result of \cite{TaTr} in the sense that we can also consider invariant measures $\mu=\varphi^2 dx$ where $\varphi \nequiv 1$. In this example, we show that we can also recover the result of \cite{TaTr} to some extend in case $\varphi\equiv1$.\\
Let $d\geq 3$ and $\varphi^2\equiv 1$, i.e. $\mu$ is the Lebesgue measure. Assume further that for $B=(B_1,...,B_d)\in L^d_{loc}(\R^d,\R^d,dx)$, there exist constants $L_i>0$, such that for $1\leq i\leq  d$
$$
\min \{ \|B_i^2 \|_{L^\infty(E_n)}, \| B_i\|_{L^d(E_n)} \} \leq L_i\nu_{\overline{E}_n}.
$$
Then by \cite[Section 2]{TaTr}, $(\E^{A,B},C_0^\infty(\R^d))$ is closable on $L^2(\R^d,dx)$ and the closure $(\E^{A,B},\F)$ satisfies the weak sector condition. Thus, we are able to apply (\ref{esti}) and (\ref{ahnp}) to $(\E^{A,B},\F)$ in order to determine the conservativeness. For instance, if there exists a constant $M_0>1$ such that
\begin{equation}\label{betap}
\frac{\langle \widetilde{A}(x)x,x\rangle}{|x|^2}+|\langle \beta(x),x\rangle | \leq M_0(|x|+1)^2\log(|x|+1)
\end{equation}
$\mu$-a.e. outside some compact subset $K$ of $\R^d$ with $K\supset K_0\cup \{x: |x|\leq 1\}$, then ($\E^{A,B},\F)$ is conservative. Indeed, by (\ref{betap})
\begin{equation}\label{atilde}
\left | \frac{\langle \widetilde{A}(x)x,x\rangle}{|x|^2}+\frac{(|x|+1)}{C|x|}\langle \beta(x),x\rangle \right |\leq M_0\left (1+\frac{2}{C}\right )(|x|+1)^2\log(|x|+1)
\end{equation}
and
\begin{equation}\label{betax}
\left | \frac{\langle \beta(x),x\rangle}{|x|}\right | \leq M_0 (|x|+1)^2
\end{equation}
$\mu$-a.e. on $K^c$. Let $\phi(r):= C\log(r+1)$ where the constant $C>0$ will be chosen later. It follows from (\ref{betap}), (\ref{atilde}) and (\ref{betax}) that
$$
\widehat{A}_n(\phi)=(\sqrt{a_n}+b_n)\mu(E_{4n}\setminus E_{2n})^{1/2}+\left ( \int_{E_{4n}\setminus E_{2n}} \left | \frac{\langle \beta(x),x\rangle}{|x|}\right |^2dx \right )^{1/2}\leq M' n^{d/2+2}
$$
for some constant $M'>0$. Consequently, putting $C=\frac{d}{2}+3$, $M=M_0(1+\frac{2}{C})$ and $\alpha=\frac{C-1}{C}$ implies there are constants $M, C>0$, and $0<\alpha<1$ such that (\ref{esti}) and (\ref{ahnp}) hold and $\tt$ is conservative.

\centerline{}
\begin{rem}\label{tatrrem}
Compared with the estimate \cite[p. 422]{TaTr}, (\ref{betap}) is a slightly stronger condition. Our aim was to demonstrate how quickly Corollary \ref{g1} can lead to acceptable results. Later, by applying Corollary \ref{g1} more consciously we will see that  $\left | \frac{\left \langle \widetilde{A}(x)x,  x \right \rangle}{|x|^2} \right |$ in (\ref{esti}) is allowed to have a large growth if $\langle \beta(x),x \rangle$ can compensate it (see Examples \ref{ex16} and \ref{ex1} below).
\end{rem}

\section{Non-sectorial applications on Euclidean space}\label{4}
In this section, we consider non-sectorial perturbations of symmetric Dirichlet forms on Euclidean space as introduced in \cite{GiTr}. For the convenience of the reader, we explain in concise form the construction of the underlying generalized Dirichlet form from \cite{GiTr}, how the constructed generalized Dirichlet fits into the frame of section \ref{2}, as well as some of its main properties. Subsequently, we apply the conservativeness criterion of section \ref{2} to the concrete situation and present explicit examples.
\centerline{}
\subsection{The construction scheme}\label{4.1}
Let $E\subset \R^d$ be either open or closed. If $E$ is closed, we assume $dx(\partial E)=0$ where $E$ is the disjoint union of its interior $E^0$ and its boundary $\partial E$. Let $\varphi \in L_{loc}^1 (E,dx)$ with $\varphi >0$ $dx$-a.e. and $d\mu :=\varphi dx$. Then $\mu$ is a $\sigma$-finite measure on $\B(E)$ and has full support. Let $C_0^\infty(E)$ be the set of infinitely often differentiable functions with compact support in $E$ if $E$ is open and $C_0^\infty(E):=\{ u\in E \longrightarrow \R : \exists v \in C_0^\infty(\R^d)$ with $v=u$ on $E \}$ if $E$ is closed.
\centerline{}
Consider $A=(a_{ij})=(a_{ji}) \in L_{loc}^1(E,\mu)$, ${1\leq i,j \leq d}$ and suppose for each relatively compact open set $V \subset E$, there exists $\nu_V >0$ such that
\begin{equation}\label{loc}
\nu_V^{-1} |\xi|^2 \leq \langle A(x) \xi,\xi\rangle \leq \nu_V |\xi|^2
\end{equation}
for all $\xi \in \R^d$, $\mu$-a.e. $x\in V$. We assume that
$$
\E^0(f,g):=\int_E \langle A(x) \nabla f(x),\nabla g(x)\rangle \mu(dx),\ f,g \in C_0^\infty(E)
$$
is closable on $L^2(E,\mu)$. Denote the closure of $(\E^0,C_0^\infty(E))$ on $L^2(E,\mu)$ by $(\E^0,D(\E^0))$. Then $(\E^0,D(\E^0))$ is a symmetric regular Dirichlet form on $L^2(E,\mu)$. Let $(L^0,D(L^0))$ be the linear operator corresponding to $(\E^0,D(\E^0))$ on $L^2(E,\mu)$ and $(T^0_t)_{t> 0}$ be the $C_0$-semigroup corresponding to $(L^0,D(L^0))$. \\
Let $B:=(B_1,\ldots ,B_d)\in L_{loc}^2 (E,\R^d,\mu)$ satisfy
$$
\int_E \langle B(x), \nabla f(x)\rangle \mu(dx)=0
$$
for any $f\in C_0^\infty(E)$. \\
The following construction from \cite{GiTr} works for any increasing sequence of relatively compact open sets $(V_n)_{n\geq 1}$ in $E$ such that $\overline{V}_n\subset V_{n+1}$, $n\geq 1$, and $\cup_{n\geq 1}V_n=E$. Since we need to assume (H3) later and want to simplify notations we assume from now on that
\begin{itemize}
\item[(B)] there exists a non-negative continuous function $\rho \in D(\E^0)_{loc}$ such that
$$
E_n:=\{x\in E:  \rho(x)< n\}
$$
is a relatively compact open set in $E$ and $\cup_{n\geq 1} E_n =E$ and $\langle B, \nabla \rho \rangle \in L^\infty_{loc}(K_0^c,\mu)$ for some compact subset $K_0$ in $E$.
\end{itemize}
Let
$$
V_n:=E_{4n}, \ n\ge 1.
$$
Then $(V_n)_{n \geq 1}$ is an increasing sequence of relatively compact open sets in $E$ such that $\overline{V}_n\subset V_{n+1}$ and $\cup_{n\geq 1}V_n=E$. Let $C_0^\infty(V_n):=\{u\in \ci : $ supp$(u)\subset V_n\}$ and $(\E^{0,n},D(\E^{0,n}))$ be the symmetric Dirichlet form on $L^2(V_n,\mu)$ given as the closure of
$$
\E^{0,n}(f,g):=\int_{V_n} \langle A(x) \nabla f(x),\nabla g(x)\rangle \mu(dx),\ f,g \in C_0^\infty(V_n).
$$
Let $(L^{0,n},D(L^{0,n}))$ be the closed linear operator on $L^2(V_n,\mu)$ associated with $(\E^{0,n},D(\E^{0,n}))$.
Then, by \cite[Section 4]{GiTr} (cf. also \cite[Proposition 1.1]{St1}), there exists a closed linear operator $(\overline{L}^n,D(\overline{L}^n))$ on $L^1(V_n,\mu)$ which is the closure of
$$
L^{0,n}u +\langle B,\nabla u \rangle, u \in D(L^{0,n})_b
$$
on $L^1(V_n,\mu)$ and which generates a sub-Markovian $C_0$-semigroup of contractions on $L^1(V_n,\mu)$. Let $(L^n,D(L^n))$ be the part of $(\overline{L}^n,D(\overline{L}^n))$ on $L^2(V_n,\mu)$ and $(T_t^n)_{t>0}$ be its sub-Markovian $C_0$-semigroup on $L^2(V_n,\mu)$.
Then $D(L^n)\subset D(\E^{0,n})$ can be seen as in \cite[proof of Lemma 3.1]{St1}.
Proceeding in the same way as just explained, there exists a linear operator $(\widehat{L}^n,D(\widehat{L}^n))$ on $L^1(V_n,\mu)$ which is the closure of
$$
L^{0,n}v -\langle B,\nabla v \rangle,\ v\in D(L^{0,n})_b
$$
on $L^1(V_n,\mu)$ and which satisfies $D(\widehat{L}^n)_b\subset D( \E^{0,n})_b$
$$
-\int_{V_n} \widehat{L}^n vvd\mu=\E^{0,n}(v,v)
$$
and
\begin{equation}\label{h2}
-\int_{V_n} \widehat{L}^nvu d\mu= \E^{0}(u,v)+\int_{V_n} \langle B,\nabla v \rangle ud\mu
\end{equation}
for any $v\in D(\widehat{L}^n)_b$ and $u\in D(\E^{0,n})_b$. Let $(\widehat{T}_t^n)_{t>0}$ be the $C_0$-semigroup of contractions on $L^1(V_n,\mu)$ corresponding to $(\widehat{L}^n,D(\widehat{L}^n))$. Let $(G_\alpha^n)_{\alpha>0}$ (resp. $(\widehat{G}_\alpha^n)_{\alpha>0})$ be the resolvent of $(T_t^n)_{t>0}$ (resp. $(\widehat{T}_t^n)_{t>0}$) on $L^2(V_n,\mu)$ (resp. $L^1(V_n,\mu)$). \\
Define for $f\in L^2(E,\mu)$,
$$
G_\alpha^n f:= G_\alpha^n (f \cdot 1_{V_n}),\ \alpha>0.
$$
Then $(G_\alpha^n)_{\alpha>0}$, $n\geq 1$, gives rise to a sub-Markovian $C_0$-resolvent of contractions on $L^2(E,\mu)$. Indeed, let $f\in L^2(E,\mu)_b$, with $f\geq 0$ $\mu$-a.e. and $\alpha>0$. Let $w_\alpha:= G_\alpha^n f-G_\alpha^{n+1}f$. Then $w_\alpha, w_\alpha^+, w_\alpha^-\in D(\E^{0,n+1})_b\subset D(\E^{0})$  
but also $w_\alpha^+\in D(\E^{0,n})_b$ since $\lim_{k\to \infty} (v_k-G_\alpha^{n+1}f)^+=w_\alpha^+$ weakly in $D(\E^{0,n})$ for any sequence $(v_k)_{k\ge 1}\subset C_0^{\infty}(V_n)$ such that $\lim_{k\to \infty}v_k=G_\alpha^{n}f$ strongly in $D(\E^{0,n})$. Since
$$
\E^{0}(w_\alpha^+,w_\alpha^-)=\E^{0}(w_\alpha^+,w_\alpha^+-w_\alpha)=-\E^{0}((-w_\alpha)\wedge 0, (-w_\alpha)-(-w_\alpha)\wedge 0)\leq 0
$$
and
$$
\int_{V_{n+1}} \langle B, \nabla w_\alpha\rangle  w_\alpha^+ d\mu= \int_{V_{n+1}}  \langle B, \nabla w_\alpha^+\rangle w_\alpha^+ d\mu=0,
$$
using in particular the dual version of (\ref{h2}), we get 
\begin{align*}
\E_\alpha^{0}(w_\alpha^+,w_\alpha^+)&\leq \E_\alpha^{0}(w_\alpha,w_\alpha^+)-\int_{V_{n+1}}  \langle B, \nabla w_\alpha\rangle w_\alpha^+ d\mu\\
&= \int_{V_{n+1}} (\alpha-L^n)G_\alpha^n f w_\alpha^+d\mu -\int_{V_{n+1}} (\alpha-L^{n+1})G_\alpha^{n+1} f w_\alpha ^+ d\mu=0.
\end{align*}
Thus $w_\alpha^+=0$ $\mu$-a.e., i.e. $G_\alpha^nf\leq G_\alpha^{n+1}f$ $\mu$-a.e. (cf. \cite[Lemma 1.6]{St1}). Define for $f\in L^2(E,\mu)_b$, with $f\geq 0$ $\mu$-a.e.
$$
G_\alpha f:= \lim_{n\to \infty}G_\alpha^n f.
$$
Let $f\in L^2(E,\mu)$, $f\geq 0$ and $(f_n)_{n\geq 1}\subset L^2(E,\mu)_b$ with $0\leq f_n\leq f_{n+1}$ $\mu$-a.e. for every $n\geq 1$ be such that $f_n\to f$ in $L^2(E,\mu)$ as $n\to \infty$. Then
$$
G_\alpha f:=\lim_{n\to \infty} G_\alpha f_n
$$
exists $\mu$-a.e. since it is an increasing sequence. For general $f\in L^2(E,\mu)$, let $G_\alpha f:= G_\alpha f^+-G_\alpha f^-$.
By \cite{GiTr} one can see $\ga$ is a sub-Markovian $C_0$-resolvent of contractions on $L^2(E,\mu)$ provided
\begin{itemize}
\item[(C)] $D(L^0)_{0,b}$ is a dense subset of $L^1(E,\mu)$,
\end{itemize}
which we assume from now on. Let $(L,D(L))$ be the generator of $\ga$ and $\tt$ be the $C_0$-semigroup associated with $(L,D(L))$. Let $(\widehat{L},D(\widehat{L}))$ be the adjoint operator of $(L,D(L))$ and $(\widehat{T}_t)_{t>0}$ (resp. $(\widehat{G}_\alpha)_{\alpha>0})$ be the $C_0$-semigroup (resp. $C_0$-resolvent) associated with $(\widehat{L},D(\widehat{L}))$. Then, we obtain a generalized Dirichlet form $\E$ defined by
$$
\E(u,v):= \begin{cases}
\ \displaystyle(- Lu, v) & u \in{D(L)},\ v\in {L^2(E,\mu)}\\
\ \displaystyle(-\widehat{L}v,u) & u\in  L^2(E,\mu), \ v\in D(\widehat{L}),
\end{cases}
$$
satisfying $D(L)\subset D(\E^0)$,
$$
\E(u,v)=\E^0(u,v)-\int_E \langle B,\nabla u \rangle vd\mu, \ u\in D(L)_b, v\in D(\E^{0,n})_b
$$
for some $n\geq 1$ and
$$
\E^0(u,u)\leq \E(u,u), \  u\in D(L),
$$
i.e. $\A\equiv 0$ on $\V=L^2(E,\mu)$ in the beginning of section \ref{2}.

\subsection{Conservativeness}\label{4.2}
By construction $\tht$ satisfies
$$
\widehat{T}_t f=\lim_{n\to \infty} \widehat{T}_t^n(f\cdot 1_{V_n})
$$
for any $f\in L^1(E,\mu)\cap L^\infty(E,\mu)$. Thus, (H1) holds with $p=1$. Let $D(N)=D(\E^0)_{loc,b}$. Putting
$$
Nv:=\langle B,\nabla v\rangle
$$
then by (\ref{h2}), (H2) holds. By assumption (B), (H3) holds.
By construction of $(\widehat{L}^n,D(\widehat{L}^n))$,
$$
f\in D(\widehat{L}^n)\text{ whenever } f\in D(L^0)_{0,b}\text{ for }n\geq k_f,
$$
i.e. (C) implies that (A) holds. Since
$$
D(L^0)_{0,b} \subset D_0=\{ f: f\in L^\infty(E,\mu)\cap L^2(E,\mu)_0 \text{ such that } \widehat{T}_s^n f \in D(\widehat{L}^n), \text{ for any }n\geq k_f,\ s\in [0,t] \},
$$
(C) also implies (H4). Thus, under the assumptions (B) and (C), Corollary \ref{g1} applies with $\Gamma(\rho,\rho)=\langle A\nabla \rho,\nabla \rho \rangle$, $N(\rho)=\langle B,\nabla \rho\rangle$, $\rho$ as in (B). This gives the following corollary. Recall that in the present situation
\begin{eqnarray*}
\widehat{A}_n(\phi) &=& \left (\sqrt{\esssup_{E_{4n}\setminus E_{2n}} \langle A \nabla\rho,\nabla \rho\rangle }+\esssup_{E_{4n}\setminus E_{2n}}\phi'(\rho) \langle A \nabla\rho,\nabla \rho\rangle\right ) \left(\int_{E_{4n}\setminus E_{2n}}\varphi(x)dx\right)^{1/2}\\
&&+ \| \langle B,\nabla \rho \rangle \|_{L^2(E_{4n}\setminus E_{2n},\mu)}.
\end{eqnarray*}
\begin{cor}\label{cor13} Assume (B) and (C) and the basic assumptions on $\varphi$, $A$, $B$ of subsection \ref{4.1}.
\begin{itemize}
\item[(i)] Assume there are constants $M,C>0$, $0<\alpha<1$ and $0\leq \beta<2$ such that
$$
 \left |  \langle A\nabla \rho,\nabla\rho\rangle + \frac{(\rho+1)\langle B,\nabla \rho\rangle}{C(2-\beta)(\log(\rho+1))^{1-\beta}}  \right |\leq M(\rho+1)^2 (\log(\rho+1))^\beta,
$$
$\mu$-a.e. outside some arbitrary compact subset $K$ of $E$ with $K\supset K_0$ and
$$
\widehat{A}_n(\phi) \leq n \exp(\alpha C \left (\log(n+1)\right )^{2-\beta}),\ \phi(r)=C(\log(r+1))^{2-\beta},
$$
for $n\gg 1$. Then $\tt$ is conservative.
\item[(ii)]
Assume there are constants $M,C>0$ and $0<\alpha<1$
$$
 \left |  \langle A\nabla \rho,\nabla\rho\rangle + \frac{1}{C}{(\rho+1)(\log(\rho+1))\langle B,\nabla \rho\rangle}  \right |\leq M(\rho+1)^2 (\log(\rho+1))^2,
$$
$\mu$-a.e. outside some arbitrary compact subset $K$ of $E$ with $K\supset K_0$ and
$$
\widehat{A}_n(\phi) \leq n\log(n+1)^{C\alpha},\ \phi(r)=C\log(\log(r+1)+1)
$$
for $n\gg 1$. Then $\tt$ is conservative.
\item[(iii)] Assume that there are constants $M,C>0$ and $0<\alpha<2$ such that
$$
\left | \langle A\nabla \rho,\nabla\rho\rangle+ \frac{ \langle B,\nabla \rho\rangle}{C\rho} \right | \leq M
$$
$\mu$-a.e. outside some arbitrary compact subset $K$ of $E$ with $K\supset K_0$ and
$$
\widehat{A}_n(\phi)\leq n\exp(\alpha C n^2), \ \phi(r)=\frac{Cr^2}{2}
$$
for $n\gg 1$. Then $\tt$ is conservative.
\end{itemize}
\end{cor}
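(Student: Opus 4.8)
The plan is to verify that the generalized Dirichlet form $\E$ constructed in subsection \ref{4.1} falls under the framework of section \ref{2} with the concrete data $\Gamma(u,v)=\langle A\nabla u,\nabla v\rangle$, $(N,D(N))=\bigl(\langle B,\nabla\,\cdot\,\rangle,\,D(\E^0)_{loc,b}\bigr)$ and $\rho$ as in (B), and then to apply Corollary \ref{g1}(i)--(iii) without change. Once (H1)--(H4) are checked, the three displayed hypotheses of Corollary \ref{cor13}(i),(ii),(iii) are literally (\ref{g11}), its $\beta=2$ counterpart, and the hypothesis of Corollary \ref{g1}(iii), after substituting $\Gamma(\rho,\rho)\mapsto\langle A\nabla\rho,\nabla\rho\rangle$ and $N(\rho)\mapsto\langle B,\nabla\rho\rangle$; and the conclusion is exactly the conservativeness of $\tt$ asserted there. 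So the whole matter reduces to assembling the verification of (H1)--(H4), most of which is already recorded in subsections \ref{4.1}--\ref{4.2}.

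First I would record (H1) and (H2). For (H1) with $p=1$: the $C_0$-semigroups $(\widehat{T}_t^n)_{t>0}$ on $L^1(V_n,\mu)$ are those associated in subsection \ref{4.1} with the $L^1$-generator $(\widehat{L}^n,D(\widehat{L}^n))$ obtained by closing $L^{0,n}v-\langle B,\nabla v\rangle$, and by construction $\widehat{T}_t f=\lim_n\widehat{T}_t^n(f\cdot 1_{V_n})$ with the convergence monotone for non-negative $f\in L^1(E,\mu)\cap L^\infty(E,\mu)$; since $V_n=E_{4n}$ this is precisely (H1). For (H2): $(\E^0,D(\E^0))$ is a symmetric strongly local regular Dirichlet form which, by the local ellipticity (\ref{loc}), is represented by the carr\'e du champ $\Gamma(u,v)=\langle A\nabla u,\nabla v\rangle$; the representation (\ref{euv}) of $\widehat{L}^n$ is just (\ref{h2}); and with $Nv:=\langle B,\nabla v\rangle$ on $D(N)=D(\E^0)_{loc,b}$ the properties (\ref{loccon}), (\ref{n1}), (\ref{n2}) reduce, respectively, to the strong locality of $\E^0$ (so $\nabla v=0$ $\mu$-a.e. where $v$ is locally constant), to the $\mu$-divergence-freeness of $B$ tested against $C_0^\infty(E)$ and then extended to the algebra $D(\E^0)_{0,b}$ by a density/continuity argument, and to the ordinary Leibniz and chain rules for $\nabla$.

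Next, (H3) is nothing but assumption (B): $\rho$ is non-negative and continuous, $\rho\in D(\E^0)_{loc}$, the sets $E_r=\{\rho<r\}$ are relatively compact open with union $E$, and $\Gamma(\rho,\rho)=\langle A\nabla\rho,\nabla\rho\rangle\in L^\infty_{loc}(K_0^c,\mu)$ by (\ref{loc}) while $N(\rho)=\langle B,\nabla\rho\rangle\in L^\infty_{loc}(K_0^c,\mu)$ by the last clause of (B). For (H4) I would show $D(L^0)_{0,b}\subset D_0$: for $f\in D(L^0)_{0,b}$ and $n\ge k_f$, the construction of $(\widehat{L}^n,D(\widehat{L}^n))$ in subsection \ref{4.1} gives $f\in D(\widehat{L}^n)$, hence $\widehat{T}_s^n f\in D(\widehat{L}^n)$ for every $s\ge 0$, so $f\in D_0$; since $D(L^0)_{0,b}$ is dense in $L^1(E,\mu)$ by (C), so is $D_0$, which is (H4). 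With (H1)--(H4) in hand, Corollary \ref{g1} applies and yields (i), (ii), (iii) directly, with $\widehat{A}_n(\phi)$ taking the explicit form displayed just before the statement.

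I do not foresee a genuine obstacle: the corollary is a specialization of the general machinery of section \ref{2}, and the construction of subsection \ref{4.1} was tailored precisely so that it goes through. The one point demanding a little care is the passage from "$B$ is $\mu$-divergence free against $C_0^\infty(E)$" to "$\int_E Nv\,d\mu=0$ for all $v\in D(\E^0)_{0,b}$" --- i.e.\ property (\ref{n1}) --- which uses the density of $C_0^\infty(E)$ in $D(\E^0)$ together with $|B|\in L^2_{loc}(E,\R^d,\mu)$; this is already implicit in the construction of \cite{GiTr} and in subsection \ref{4.1}.
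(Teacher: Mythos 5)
Your proposal is correct and follows essentially the same route as the paper: subsection \ref{4.2} verifies (H1) with $p=1$ from the construction, (H2) via (\ref{h2}) with $Nv=\langle B,\nabla v\rangle$ on $D(N)=D(\E^0)_{loc,b}$, (H3) from assumption (B), and (H4) from $D(L^0)_{0,b}\subset D_0$ together with (C), and then invokes Corollary \ref{g1} with $\Gamma(\rho,\rho)=\langle A\nabla\rho,\nabla\rho\rangle$ and $N(\rho)=\langle B,\nabla\rho\rangle$ exactly as you do. The only difference is that you spell out the verification of (\ref{loccon}), (\ref{n1}), (\ref{n2}) a bit more explicitly, which the paper leaves implicit in the construction of subsection \ref{4.1}.
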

\centerline{}

\subsubsection{Example one}\label{ex16}
We first consider a multi-dimensional example where a large variance is compensated by a strong anti-symmetric part of the drift.\\
Let $E=\R^2$ and $d\mu=\varphi dx$, where $\varphi=\xi^2$  with $\xi \in H^{1,2}_{loc}(\R^d,dx)$, $\xi>0$ $dx$-a.e. is such that
$$
\varphi(x)=\frac{1}{5}|x|(|x|+1),\quad \mu\text{-a.e. }x\in K^c
$$
where $K$ is a compact subset of $\R^2$. Assume that $A=(a_{ij})=(a_{ji})\in H^{1,2}_{loc}(\R^2,\mu)$, $1\leq i,j \leq 2$ is  locally strictly elliptic (see (\ref{loc})). Then the symmetric bilinear form
$$
\E^0(f,g):=\int_{\R^2} \langle A(x) \nabla f(x),\nabla g(x)\rangle \mu(dx),\ f,g \in C_0^\infty(\R^2)
$$
is closable on $L^2(\R^2,\mu)$ by \cite[I. Proposition 3.3]{MR}. We further assume
$$
|a_{11}(x)|,\ |a_{12}(x)| \leq M_0 (|x|+1)^2\log(|x|+1),
$$
$\mu$-a.e. $x\in K^c$ for some constant $M_0>0$ and
$$
a_{22}(x)=\frac{x_1^4}{x_2},\quad  \mu\text{-a.e. }x\in K^c
$$
where $x=(x_1,x_2)\in \R^2$. Because of the large $a_{22}(x)$, $(\E^0,D(\E^0))$ does not satisfy (\ref{first}). Let $B(x):=\frac{1}{\varphi(x)}(x_2^2,-x_1^4)$. Then $B\in L^2_{loc}(\R^2,\R^2,\mu)$ and $|B|\in L^\infty_{loc}(K^c,\mu)$. Since $B(x)=\frac{1}{\varphi}(\partial_2h, -\partial_1h)$, where $h(x)=\frac{1}{5}x_1^5+\frac{1}{3}x_2^3 \in C^\infty (\R^2)$,
$$
\int_{\R^2} \langle B,\nabla f\rangle d\mu=\int_{\R^2} (\partial_2h \partial_1 f - \partial_1h \partial_2 f) dx=0,\ \text{for any } f\in C_0^\infty(\R^2).
$$
Then by the construction scheme of \ref{4.1}, we obtain a generalized Dirichlet form $\E$ given as an extension of
$$
\int_{\R^2} \langle A(x) \nabla f(x),\nabla g(x)\rangle \mu(dx)-\int_{\R^2} \langle B(x), \nabla f(x) \rangle g(x) \mu(dx),\ f,g \in C_0^\infty(\R^2)\cap D(L^0)=C_0^\infty(\R^2).
$$
As we have seen in subsection \ref{4.1}, (H1) and (H2) hold with $p=1$, $D(N)=D(\E^0)_{loc,b}$ and $Nv=\langle B, \nabla v\rangle$. Let $\rho(x):=|x|$. Then $\rho\in D(\E^0)_{loc}$ and we obtain
$$
\langle A \nabla \rho,\nabla \rho\rangle, \ \langle B,\nabla \rho \rangle \in L^\infty_{loc}(K^c,\mu),
$$
hence (H3) holds. Since $C_0^\infty(\R^2)\subset D(L^0)_{0,b}$, (C) holds (i.e. (H4) holds). For $x\in K^c$, it holds
\begin{align*}
\left | \langle A(x) \nabla |x|, \nabla |x| \rangle + \frac{(|x|+1)}{5}\langle B(x),\nabla |x| \rangle \right |&= \frac{1}{|x|^2}\left | a_{11}(x) x_1^2+2a_{12}(x)x_1x_2+a_{22}(x)x_2^2+x_1x_2^2-x_1^4x_2 \right |\\
&= \frac{1}{|x|^2}\left | a_{11}(x) x_1^2+2a_{12}(x)x_1x_2+x_1x_2^2 \right |\\
&\leq M(|x|+1)^2\log(|x|+1)
\end{align*}
for some constant $M>0$. Let
$$
\phi(r):=5\log(r+1),
$$
i.e. $C=5$ and $\beta=1$ in Corollary \ref{cor13}. Then we obtain for $n\gg 1$, and some positive constant $N$
$$
a_n\leq N n^4, \ b_n\leq N n^3, \ \| \langle B,\nabla \rho\rangle \|_{L^2(E_{4n}\setminus E_{2n},\mu)} \leq N n^4,
$$
which implies
$$
\widehat{A}_n(\phi)\leq N n^4.
$$
Now choose $\alpha=\frac{4}{5}$ in Corollary \ref{cor13} and obtain that $\tt$ is conservative.

\subsubsection{Example two}\label{ex1}
Let $d=1$ and $d\mu=\varphi dx$ where
$$
\varphi(x):= \begin{cases} \ 1 \quad &\text{if}\quad x>-1,\\
\ \displaystyle \frac{1}{|x|^3}   &\text{if}\quad x\leq-1.
\end{cases}
$$
Then $\varphi\in L_{loc}^1(\R,dx)$ and $\mu$ is a $\sigma$-finite(not finite) measure on $\B(\R)$. Let
$$
A(x):= \begin{cases} \ \displaystyle (x+\sqrt 2)^2\quad &\text{if}\quad x\geq 0,\\
\ \displaystyle\frac{x^4-x^3+6}{3}  &\text{if}\quad x<0.
\end{cases}
$$
Let $(\E^0,D(\E^0))$ be the symmetric Dirichlet form on $L^2(\R,\mu)$, which is the closure of $(\E^0,C_0^\infty(\R))$ on $L^2(\R,\mu)$ defined by
$$
\E^0(f,g):=\int_\R A(x)f'(x)g'(x)\mu(dx), \ f,g\in C_0^\infty(\R).
$$
Let $d$ be the metric induced by Euclidean norm, i.e. $d(x,y)=|x-y|$. Put $\rho(x):=d(x)=|x|$. Since $\Gamma(\rho,\rho)(x)=A(x)(\rho'(x))^2=A(x)$, the first condition (\ref{first}) in Corollary \ref{cor13} can not hold. Let $d^{int}$ be the intrinsic metric, i.e.
$$
d^{int}(x,y):=\sup\left \{u(x)-u(y): u\in D(\E^0)_{loc}\cap C(\R),\ \Gamma(u,u)\leq 1 \ \text{on } \R \right \}.
$$
Then we obtain that
$$
d^{int}(x,y)=\left |\int_x^y \frac{1}{\sqrt{A(z)} }dz \right |.
$$
Indeed, let us fix $y\in \R$, then
$$
u(x):=\left |\int_x^y \frac{1}{\sqrt{A(z)} }dz \right | \in D(\E^0)_{loc}\cap C(\R)
$$
satisfies $u(y)=0$ and $\Gamma(u,u)=A\cdot (u')^2=1.$ By definition of $d^{int}$, $d^{int}(x,y)\geq u(x)$. Suppose that $d^{int}(x,y)>u(x)$, then there exists $v\in D(\E^0)_{loc}\cap C(\R)$ such that $\Gamma(v,v)\leq 1$ and $v(x)-v(y)> u(x)-u(y)$. However $\Gamma(v,v)\leq 1$ implies that
$$
-\frac{1}{\sqrt{A}} \leq v' \leq \frac{1}{\sqrt{A}},
$$
which further implies the contradiction $v(x)-v(y)\leq u(x)-u(y)$.\\
We have $\int_{-\infty}^0 \frac{1}{\sqrt{A(z)}} dz<\infty$, so $(-\infty, 0)\subset B_R^{d^{int}}$ for some $R>0$. In other words, the ball $B_R^{d^{int}}$ induced by the metric $d^{int}$ is not a relatively compact set in $\R$. Thus assumption (A) in \cite{Stu1} does not hold and we also can not apply \cite[Theorem 4]{Stu1} to determine the conservativeness of ($\E^0,D(\E^0))$. However, by a scale function argument, we may show that $(\E^0,D(\E^0))$ is conservative. Indeed, since $A \varphi$ is continuous and strictly positive,
$$
h(x):=\int_0^x \frac{1}{A(y)\varphi(y)}dy
$$
is well-defined and satisfies
$$
\E^0(h,g)=0 \quad \text{for any }g\in C_0^\infty(\R)
$$
which implies that $h$ is harmonic, i.e. $L^0 h=0$. Thus we may regard $h$ as canonical scale and $\frac{1}{h'A}dx=\varphi dx$ as the corresponding speed measure. Define
$$
\Phi(x):=\int_0^x (h(x)-h(y)) \varphi(y)dy.
$$
Then by Feller's test for non-explosion (see for instane \cite[Chapter 3.6]{McK}), $(\E^0,D(\E^0))$ is conservative, if and only if
$$
\lim_{x\to \infty}\Phi(x)=\lim_{x\to -\infty}\Phi(x)=\infty.
$$
If $x\geq 0$, then $\varphi(x)\equiv 1$ and
$$
\Phi(x)=xh(x)-\int_0^x h(y)dy=\int_0^x h'(y)ydy=\int_0^x\frac{y}{(y+\sqrt{2})^2}dy \to \infty
$$
as $x\to \infty$. In case $x<-1$, then
$$
h(x)=c_1 +\int_{-1}^x \frac{-3y^3}{y^4-y^3+6}dy=c_1 -\int_{1}^{-x} \frac{3y^3}{y^4+y^3+6}dy
$$
for some constant $c_1$, hence $h(x)\leq -\frac{3}{8}\log(-x)+c_1$ and $\lim_{x\to -\infty} h(x)=-\infty$. Furthermore, for $x<-1$
\begin{align*}
\Phi(x)=\int_0^x (h(x)-h(y))\varphi(y)dy&=h(x)\left ( c_2+ \int_{-1}^x \varphi(y)dy \right ) +c_3- \int_{-1}^x h(y)\varphi(y)dy\\
&=h(x)\left ( c_2+ \int_{-1}^x \frac{1}{-y^3}dy \right ) +c_3- \int_{-1}^x \frac{h(y)}{-y^3}dy\\
&\geq h(x)\left (c_2 +\frac{1}{2x^2}-\frac{1}{2}\right )+c_3+\int_{-1}^x \frac{-3\log(-y)+8c_1}{8y^3}dy
\end{align*}
where $c_2<0$, $c_3>0$ are some constants. Thus, $\lim_{x\to -\infty}\Phi(x)=\infty.$ Consequently, $(\E^0,D(\E^0))$ is conservative.\\
Let $B(x):= \frac{1}{\varphi(x)}$. Then $|B|\in L^2_{loc}(\R,\mu)$ and satisfies
$$
\int_\R  B(x)  f'(x) \mu(dx)=\int_\R f'(x)dx=0
$$
for any $f\in C_0^\infty(\R)$. Consequently, by the construction scheme of subsection \ref{4.1}, we can construct a generalized Dirichlet form $\E$ given as an extension of
$$
\int_\R A(x)f'(x)g'(x)\mu(dx)-\int_\R B(x)f'(x)g(x)\mu(dx)\quad f,g\in C_0^\infty (\R) .
$$
Let $\rho(x)=|x|$. Then in the same way as in Example \ref{ex16}, we can obtain (H1)-(H4). If $x\geq 1$, then
$$
\left | A(x)+\frac{(|x|+1)}{3}\Big \langle B(x),\frac{x}{|x|} \Big \rangle \right |\leq|(x+\sqrt{2})^2+\frac{1}{3}(x+1)|
$$
and if $x\leq -1$, then
$$
\left | A(x)+\frac{(|x|+1)}{3}\Big \langle B(x),\frac{x}{|x|} \Big \rangle \right |=2.
$$
Consequently,
$$
 \left | A(x)+\frac{(|x|+1)}{3}\Big \langle B(x),\frac{x}{|x|} \Big \rangle \right | \leq M (|x|+1)^2,
$$
where $M>0$ is constant, i.e. $C=3$, $\beta=1$ and $\phi(r):=3\log(r+1)$ in Corollary \ref{cor13}. Furthermore, for $n\gg 1$,
$$
\widehat{A}_n(\phi)\leq Nn^{\frac{7}{2}}
$$
where $N>0$ is some constant depending on $T>0$. Now choose $\alpha:=\frac{5}{6}$ in Corollary \ref{cor13} and obtain that $\tt$ is conservative.

\begin{rem}\label{rem15}
Since the above example is an example for a diffusion in $\R$, we are able to symmetrize $\E$ as done in \cite[3.2.2]{GT}, i.e. there is a symmetric Dirichlet form $(\~\E,D(\~\E))$ in $L^2(\R,\~\mu)$ whose semigroup is locally equal to the semigroup $\tt$ of $\E$. Indeed, $(\~\E,D(\~\E))$ can be expressed as the following form
$$
\~\E(f,g)=\int_\R A(x)f'(x)g'(x) d\~\mu
$$
where $d\~\mu=\~\varphi dx$ and $\~\varphi(x)=\exp\left( \int_0^x \frac{\varphi'}{\varphi}(s)+\frac{B}{A}(s)ds \right)$. By the same reason as for $(\E^0,D(\E^0))$ in the example above, we can not apply \cite[Theorem 4]{Stu1} to determine the conservativeness of $\widetilde{\E}$. However, by our results on the non-symmetric realization $\E$ of $\widetilde{\E}$ we obtain that $(\widetilde{\E},D(\widetilde{\E}))$ is conservative.

\end{rem}

{\bf Acknowledgments.} This research was supported by Basic Science Research Program through the National Research Foundation of NRF-2012R1A1A2006987.

\bigskip
\noindent
{\it Gerald Trutnau\\Department of Mathematical Sciences and\\ 
Research Institute of Mathematics of Seoul National University\\
599 Gwanak-Ro, Gwanak-Gu, Seoul 08826, South Korea\\
E-mail: trutnau@snu.ac.kr\\ \\
Minjung Gim \\
National Institute for Mathematical Sciences\\ 
70 Yuseong-daero 1689 beon-gil, Yuseong-gu, Daejeon 34047, South Korea\\
E-mail: mjgim@nims.re.kr}

\end{document}